\newcommand{\catC}{\mathcal{C}}
\newcommand{\catH}{\mathcal{H}}
\newcommand{\Qnd}{\mathsf{Qnd}}
\newcommand{\Ker}{\operatorname{Ker}}
\newcommand{\Qndt}{\mathsf{Qnd}^{\ast}}
\newcommand{\Inn}{\operatorname{Inn}}
\newcommand{\Ext}{\operatorname{Ext}}
\newcommand{\TExt}{\operatorname{TExt}}
\newcommand{\CExt}{\operatorname{CExt}}
\newcommand{\NExt}{\operatorname{NExt}}
\newcommand{\Grp}{\mathsf{Grp}}
\newcommand{\Ab}{\mathsf{Ab}}
\newcommand{\ab}{\mathsf{ab}}
\newcommand{\Aut}{\operatorname{Aut}}
\newcommand{\Eq}{\operatorname{Eq}}
\newcommand{\id}{\operatorname{id}}
\newcommand{\lhdi}{\lhd^{-1}}
\newcommand{\catA}{\mathcal{A}}
\newcommand{\catX}{\mathcal{X}}
\renewcommand{\Im}{\operatorname{Im}}
\def\cartesien{%
    \ar@{-}[]+R+<6pt,-1pt>;[]+RD+<6pt,-6pt>%
    \ar@{-}[]+D+<1pt,-6pt>;[]+RD+<6pt,-6pt>%
}
\def\cartesien{%
    \ar@{-}[]+R+<6pt,-1pt>;[]+RD+<6pt,-6pt>%
    \ar@{-}[]+D+<1pt,-6pt>;[]+RD+<6pt,-6pt>%
  }
\newtheorem{theorem}{Theorem}[section]
\newtheorem{lemma}[theorem]{Lemma}
\newtheorem{proposition}[theorem]{Proposition}
\newtheorem{corollary}[theorem]{Corollary}
\newtheorem{definition}[theorem]{Definition}
\newtheorem{example}[theorem]{Example}
\begin{document}

\author{M.~Duckerts-Antoine}
\address{Centre for Mathematics, University of Coimbra, Department of Mathematics, Apartado 3008, 3001-501 Coimbra, Portugal}
\email{mathieud@mat.uc.pt}

\author{V.~Even}
\address{Institut de Recherche en Math\'ematique et Physique,  Universit\'e catholique de Louvain, Chemin du Cyclotron 2, 1348 Louvain-la-Neuve, Belgium.}
\email{valerian.even@uclouvain.be}

\author{A.~Montoli}
\address{Dipartimento di Matematica ``Federigo Enriques'', Universit\`{a} degli Studi di Milano, Via Saldini 50, 20133 Milano, Italy.}
\email{andrea.montoli@unimi.it}

\title[How to centralize and normalize quandle extensions]{How to centralize and normalize\\ quandle extensions}

\begin{abstract} We show that quandle coverings in the sense of Eisermann form a
(regular epi) reflective subcategory of the category of surjective
quandle homomorphisms, both by using arguments coming from
categorical Galois theory and by constructing concretely a
centralization congruence. Moreover, we show that a similar result
holds for normal quandle extensions.\\
Keywords: Quandle, quandle covering, central extension, normal
extension, Galois theory.
\end{abstract}

\maketitle


\section{Introduction}


The origins of the structure of quandle go back to the early
$40$'s when M.~Takasaki \cite{Tak43} defined the notion of a kei
in order to find an algebraic structure to capture the properties
of reflections in a Euclidean space. A \emph{kei} is defined as a
set~$A$ equipped with a binary operation $\lhd$ satisfying the
following three identities for all $a,\ b,\ c \in A$ :
\begin{itemize}
    \item $a\lhd a = a$;
    \item $(a \lhd b) \lhd b = a$;
    \item $(a \lhd b) \lhd c = (a \lhd c) \lhd (b \lhd c)$.
\end{itemize}
The notation $a \lhd b$ stands for the reflection of $a$ over $b$.

Forty years later arose the structure of quandle, as defined by D.
Joyce~\cite{Joy82a}. The aim of this structure was to construe the
symmetries of a geometric object on the object itself. In
particular, quandles have interesting interactions with knot
theory where they actually provide a knot invariant. In fact,
replacing the knot group with the knot quandle, it was proved
in~\cite{Joy82a} that two tame knots with isomorphic knot quandles
are equivalent up to orientation.

\begin{definition}
A \emph{quandle} is a set $A$ equipped with two binary operations
$\lhd$ and~$\lhdi$ such that for all $a,\ b,\ c \in A$:
\begin{enumerate}
    \item[(Q1)] $a \lhd a = a$ \hfill (idempotency);
    \item[(Q2)] $(a \lhd b) \lhdi b = a = (a \lhdi b) \lhd b$  \hfill (right invertibility);
    \item[(Q3)] $(a \lhd b) \lhd c = (a \lhd c) \lhd (b \lhd c)$ \hfill  (self-distributivity).
\end{enumerate}
\end{definition}

Given two quandles $A$ and $B$, a function $f \colon A \to B$ is a
\emph{quandle homomorphism} when it preserves the two binary
operations: the equalities $f(a \lhd a') = f(a) \lhd f(a')$ and
$f(a \lhdi a') = f(a) \lhdi f(a')$ hold for all $a, a' \in A$. We
denote the category of quandles by $\Qnd$ and call a surjective
quandle homomorphism an \emph{extension}.

In this article, we will be mainly interested in two classes of
extensions in $\Qnd$: the classes of central and normal
extensions. Both notions come from categorical Galois theory and
they are defined with respect to a fundamental adjunction linking
the category of quandles to its full subcategory of trivial
quandles (a quandle $A$ is trivial when $a \lhd a' = a$ and $a
\lhdi a' = a$ for all $a, a'$ in $A$).
\begin{equation}\label{adjqndt}
\begin{tikzcd}
\Qnd \ar[rr,bend left,"\pi_0"] & \perp & \Qndt \ar[ll,bend
left,"\supseteq"]
\end{tikzcd}
\end{equation}

Note that the notion of central extension that we study here
corresponds to another notion (specific to the context of
quandles) introduced by M.~Eisermann. In~\cite{Eis14}, he
developed a Galois theory for what he called \emph{quandle
coverings}.  A quandle homomorphism $f \colon A \to B$ is a
quandle covering if it is surjective and $f(a) = f(a')$ implies $c
\lhd  a = c \lhd a'$ for all $c \in A$. The coincidence of the two
notions was proved in~\cite{Eve14a}.

On the contrary, normal extensions do not seem to have been
studied earlier in the context of quandles. Nevertheless, they can
be also described easily:  a surjective homomorphism $f \colon A
\to B$ is a normal extension if, for all $a_i \in A$, $\alpha_i
\in \{ -1,1\}$ with $0\leq i \leq n$,
\[
a_0 \lhd^{\alpha_1} a_1 \lhd^{\alpha_2} \dots \lhd^{\alpha_n} a_n
= a_0,
\]
implies
\[
a'_0 \lhd^{\alpha_1} a'_1 \lhd^{\alpha_2} \dots \lhd^{\alpha_n}
a'_n = a'_0
\]
for all $a'_i \in f^{-1}(f(a_i))$.

The aim of this article is to prove that it is possible to
centralize and normalize any quandle extension. More formally, we
shall prove that there are left adjoints to the inclusion functors
$\CExt(\Qnd) \hookrightarrow \Ext(\Qnd)$ and $\NExt(\Qnd)
\hookrightarrow \Ext(\Qnd)$ where $\CExt(\Qnd)$ is the category of
central extensions, $\NExt(\Qnd)$ is the category of normal
extensions, and $\Ext(\Qnd)$ the category of extensions (all
viewed as full subcategories of the category $\Qnd^\rightarrow$ of
arrows in $\Qnd$). The construction of a universal central (or
normal) extension associated with any surjective quandle
homomorphism can be used to obtain a description of the
fundamental group of a quandle, and to relate it with cohomology
and extension theory of quandles. It is, moreover, the first step
in order to get a homotopy theory for quandles (this is material
for future work).

\section{On the category of quandles}

We already recalled in the Introduction the notion of quandle.
Here we provide some basic examples of quandles:

\begin{example}[\cite{Joy82a}]
    \begin{enumerate}
        \item Any set $A$ is a quandle with the structure defined by \[a \lhd a' = a = a \lhdi a'\] for
        all $a,\ a' \in A$. Such quandles are called \emph{trivial quandles}.
        \item  Given a multiplicative group $G$, we can define \[g \lhd h = h^{-1} \cdot g \cdot h\] and
         \[g \lhdi h = h \cdot  g \cdot h^{-1}\] for all $g,\ h \in G$. The group $G$ equipped with these operations is
         a quandle called the \emph{conjugation quandle}. This is,
         in some sense, the key example of quandles.
         Indeed, as observed in \cite{Joy82a}, the Wirtinger
         presentation of the knot group of a given knot only
         involves conjugations. Whence the idea of replacing knot
         groups with knot quandles: the axioms defining a quandle
         have been obtained from those satisfied by the group
         conjugation. As we recalled in the Introduction, the knot
         quandle of a tame knot characterizes the knot up to
         orientation.
        \item Let $n$ be a positive integer and define on $\mathbb{Z}_n$ the operations
        \[i \lhd j = 2j - i = i \lhdi j\] for all $i$, $j \in \mathbb{Z}_n$. This defines a quandle called \emph{dihedral quandle}.
        \item More generally, if $G$ is a multiplicative group, we can define \[g \lhd h = h \cdot g^{-1}
        \cdot h = g \lhdi h\] for all $g,\ h \in G$. This defines a quandle called \emph{core quandle}.
        \item Let $M$ be a module over the ring $\mathbb{Z}[t,t^{-1}]$ of Laurent polynomials. Define
        \[x \lhd y = t(x-y)+y\] and \[x \lhd^{-1} y = t^{-1}(x-y)+y\] for all $x,\ y \in M$. This defines a quandle
        called \emph{Alexander quandle}. These quandles can be used to compute the Alexander polynomial of a knot.
    \end{enumerate}
\end{example}

A well-known fact about any quandle $A$ is that the axioms (Q2)
and (Q3) imply that, for every $b \in A$, the \emph{right
translation} $(-)^{\rho_b} \colon A \to A$ defined by $a^{\rho_b}
= a \lhd b$  is an  automorphism.

\begin{definition}
    The group $\Inn(A)$ of \emph{inner automorphisms} of a quandle $A$ is the subgroup of $\Aut(A)$ (the group of all automorphisms
    of $A$) generated by all right translations $\rho_b$ with $b \in A$.
\end{definition}

From a quandle $A$, we define a connected component to be an orbit
under the action of the group $\Inn(A)$ (for $a\in A$, we write
$\eta_A(a)$ for its orbit). The set of connected components of $A$
is denoted by $\pi_0(A)$ and yields a trivial quandle. The
reflection of the category of quandles into its full subcategory
of trivial quandles is precisely given by $\pi_0 \colon \Qnd \to
\Qndt$, which takes a quandle $A$ and send it to its set of
connected components $\pi_0(A)$. The component of the unit $\eta$
at $A$ is given by $\eta_A \colon A \to \pi_0(A) \colon a \mapsto
\eta_A(a)$.

We recall that a \emph{variety}  (in the sense of universal
algebra) is a class of algebras of signature $\mathcal{F}$
satisfying a set of identities of the same signature
$\mathcal{F}$, with morphisms preserving the operations in
$\mathcal{F}$ (see Birkhoff's theorem~\cite{Bir35}).

Like any other variety, $\Qnd$ is in particular a regular category
\cite{Barr}. This means that
\begin{enumerate}
\item every morphism $f$ can be factored as $f = m\circ p$ where
$m$ is a monomorphism and $p$ is a \emph{regular epimorphism} (the
coequaliser of some pair of parallel morphisms), and such a
factorization is unique up to isomorphisms; \item every pullback
of a regular epimorphism along any morphism is a regular
epimorphism (we say that regular epimorphisms are
\emph{pullback-stable}).
\end{enumerate}
We recall that in $\Qnd$, as in any variety, a monomorphism is an
injective homomorphism and a regular epimorphism is a surjective
homomorphism. A consequence of the above axioms is that regular
epimorphisms are \emph{orthogonal} to monomorphisms, by which we
mean that for every commutative diagram
\[
\begin{tikzcd}
    A \ar[r,"p"] \ar[d,"u"'] & B \ar[d,"v"] \\
    C \ar[r,"m"'] & D
\end{tikzcd}
\]
where $p$ is a regular epimorphism and $m$ a monomorphism, there
exists a unique morphism $t\colon B \to C$ such that $u=t\circ p$
and $v = m \circ t$. In particular, this implies that a
factorization $f =m\circ p$ as above is unique (up to
isomorphism). We shall use this fact in Section~\ref{Normalizing
an extension}.

\section{Central and normal extensions} \label{section central and normal extensions}

The adjunction~\eqref{adjqndt} is at the heart of this paper. The
reason is that it fits in the categorical Galois theory developed
by G. Janelidze~\cite{Jan90}. In particular, the categorical
Galois theory studies central extensions and normal extensions
defined with respect to adjunctions that satisfy a certain
pullback preservation property. Such adjunctions are said to be
admissible (see the beginning of Section~\ref{Trivializing an
extension} for the definition). In order to give a hint of this
theory, let us restrict ourselves to the case of an adjunction
\begin{equation}\label{A}
\begin{tikzcd}
\catC \arrow[rr,bend left, "I"] & \perp & \catH \arrow[ll,bend
left, "H"]
\end{tikzcd}
\end{equation}
where $\catC$ is a variety and $\catH$ a \emph{subvariety} of
$\catC$ (see \cite{JK94} for a more general approach). This
basically says that the components of the unit (resp.~counit) are
regular epimorphisms (resp.~isomorphisms), $H$ is an inclusion,
and $\catH$ is closed under quotients (if $f\colon A \to B$ is a
surjective homomorphism and $A$ is in $\catH$, $B$ is also in
$\catH$). In particular, the adjunction is  a \emph{(regular
epi)-reflection}.

Relatively to any adjunction~\eqref{A}, it is possible to define
the notions of trivial extension, central extension, and normal
extension. These are all special surjective homomorphisms in the
category $\catC$. A surjective homomorphism $f \colon A \to B$
will often be called an \emph{extension (of $B$)}.

\begin{definition}
    An extension $f \colon A \to B$ is
\begin{itemize}
\item \emph{trivial} when the naturality square of the adjunction
    \begin{equation}\label{nat}
    \begin{tikzcd}
    A \ar[r,"\eta_A"] \ar[d,"f"'] & HI(A) \ar[d,"HI(f)"] \\
    B \ar[r,"\eta_B"'] & HI(B)
    \end{tikzcd}
    \end{equation}
    is a pullback;
\item \emph{central} when there exists a surjective homomorphism
$p \colon E \to B$ such that $p_1 \colon E \times_B A \to E$ in
the pullback
    \begin{equation}
    \begin{tikzcd}
    E \times_B A \ar[r,"p_2"] \ar[d,"p_1"'] & A \ar[d,"f"] \\
    E \ar[r,"p"'] & B
    \end{tikzcd}
    \end{equation}
 is a trivial extension (in that case we say that $f$ is \emph{split by} $p$);
\item \emph{normal} when $f_1 \colon \Eq(f) \to A$ in the pullback
    \begin{equation}
    \begin{tikzcd}
    \Eq(f) \ar[r,"f_2"] \ar[d,"f_1"'] & A \ar[d,"f"] \\
    A \ar[r,"f"'] & B
    \end{tikzcd}
    \end{equation}
     is a trivial extension (i.e.~$f$ is split by itself). Here
     $\Eq(f)$ is the kernel congruence of $A$: two elements $a_1,
     a_2 \in A$ are in relation w.r.t. $\Eq(f)$ if and only if
     $f(a_1)  = f(a_2)$.
\end{itemize}
\end{definition}

As proved in \cite{Eve14a}, for our adjunction~\eqref{adjqndt}, an
extension $f \colon A \to B$ is trivial if and only if the
following condition $(T)$ holds:
\begin{equation}\label{T}
\forall \, a,\ a' \in A, \text{ if } f(a) = f(a') \text{ and }
\pi_0(a) = \pi_0(a'), \text{ then } a = a'. \tag{T}
\end{equation}

As explained before, the central extensions relative to the
adjunction~\eqref{adjqndt} turn out to be exactly the
\emph{quandle coverings} defined by M. Eisermann~\cite{Eis14}: $f
\colon A \to B$ is a quandle covering if it is surjective and
$f(a) = f(a')$ implies $c \lhd  a = c \lhd a'$ for all $c \in A$.
The proof of this result can be found in~\cite{Eve14a}. It is
interesting for us to note that this proof uses the existence of a
special central extension $p \colon \tilde{A} \to A$ for any
quandle $A$ which is ``weakly universal'', i.e. such that every
central extension of $A$ is actually split by $p$
(See~\cite[Theorem 2]{Eve14a} and its proof). In particular $p
\colon \tilde{A} \to A$ is a normal extension. We shall make use
of those special normal extensions in Section~\ref{Centralizing an
extension}.

In many cases, normal extensions coincide with central extensions.
A typical example is given by the (regular epi)-reflection between
the variety of groups and its subvariety of abelian groups
\begin{equation}\label{adjgrpab}
\begin{tikzcd}
\Grp \ar[rr,"\ab",bend left] & \perp &  \Ab \ar[ll,bend
left,"\supseteq"]
\end{tikzcd}
\end{equation}
Here $\ab$ is the abelianization functor sending a group $G$ to
$G/[G,G]$. The central extensions for this adjunction are exactly
the classical central extensions: a group homomorphism $f\colon
A\to B$ is central if and only if its kernel $\Ker(f)$ is a
subgroup of the centre $Z(A)$ of $A$. For our
adjunction~\eqref{adjqndt}, it is not true that normal and central
extensions coincide. In the following proposition we give a
description of the normal extensions and, via an example (see
Example~\ref{Example central is not normal} below), we show that
the central extensions are not always normal extensions (of
course, the converse is true: every normal extension is central).

\begin{proposition}\label{normal}
    A surjective quandle homomorphism $f \colon A \to B$ is a normal extension if and only if the following condition $(N)$ holds:

    for all $a_i \in A$ (with $0\leq i \leq n$) and $\alpha_j \in \{-1,1\}$  (with $1\leq j \leq n$),
    if \[a_0 \lhd^{\alpha_1} a_1 \lhd^{\alpha_2} \dots \lhd^{\alpha_n} a_n = a_0,\] then
    \[a'_0 \lhd^{\alpha_1} a'_1 \lhd^{\alpha_2} \dots \lhd^{\alpha_n} a'_n = a'_0\] for all $a'_i \in f^{-1}(f(a_i))$.
\end{proposition}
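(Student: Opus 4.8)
The plan is to unpack the definition of a normal extension — namely that $f_1\colon \Eq(f)\to A$ is a trivial extension — and to translate the triviality condition $(T)$ into the combinatorial statement $(N)$. Since normality means $f$ is split by itself, I work with the kernel congruence $\Eq(f)=A\times_B A$, whose elements are pairs $(a,a')$ with $f(a)=f(a')$, and the projection $f_1\colon \Eq(f)\to A$, $(a,a')\mapsto a$. By the characterisation of trivial extensions recalled before the statement, $f_1$ is trivial if and only if condition $(T)$ holds for $f_1$: for all pairs $(a,a'),(b,b')\in\Eq(f)$, if $f_1(a,a')=f_1(b,b')$ and $\pi_0(a,a')=\pi_0(b,b')$ in $\pi_0(\Eq(f))$, then $(a,a')=(b,b')$. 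The first condition $f_1(a,a')=f_1(b,b')$ just says $a=b$, so everything reduces to understanding when two pairs with equal first coordinate lie in the same connected component of $\Eq(f)$.

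The key step is therefore to describe the connected components of the quandle $\Eq(f)$ explicitly. Recall that $\pi_0(\Eq(f))$ is the orbit set of the action of $\Inn(\Eq(f))$, and $\Inn(\Eq(f))$ is generated by the right translations by elements of $\Eq(f)$. A right translation by $(c,c')\in\Eq(f)$ sends $(a,a')$ to $(a\lhd c,\ a'\lhd c')$, and similarly for $\lhdi$. Hence a general element in the orbit of $(a_0,a'_0)$ is obtained by applying a finite word of such translations, giving a pair of the form
\[
\bigl(a_0\lhd^{\alpha_1}c_1\lhd^{\alpha_2}\cdots\lhd^{\alpha_n}c_n,\ a'_0\lhd^{\alpha_1}c'_1\lhd^{\alpha_2}\cdots\lhd^{\alpha_n}c'_n\bigr),
\]
where $(c_j,c'_j)\in\Eq(f)$ for each $j$, i.e.\ $f(c_j)=f(c'_j)$. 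So two pairs $(a_0,a'_0)$ and $(b_0,b'_0)$ lie in the same component exactly when such a word carries the one to the other in both coordinates simultaneously.

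Putting these together, I would argue as follows. For necessity, suppose $f$ is normal and that $a_0\lhd^{\alpha_1}a_1\lhd\cdots\lhd^{\alpha_n}a_n=a_0$; pick any $a'_i\in f^{-1}(f(a_i))$. Then $(a_i,a'_i)\in\Eq(f)$, and acting on $(a_0,a'_0)$ by the word built from $(a_1,a'_1),\dots,(a_n,a'_n)$ produces a pair whose first coordinate equals $a_0$. Thus this new pair and $(a_0,a'_0)$ have the same first coordinate and lie in the same component of $\Eq(f)$; triviality $(T)$ for $f_1$ forces them to be equal, so their second coordinates agree, which is precisely $a'_0\lhd^{\alpha_1}a'_1\lhd\cdots\lhd^{\alpha_n}a'_n=a'_0$, establishing $(N)$. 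For sufficiency, assume $(N)$ and take two pairs in $\Eq(f)$ with equal first coordinate lying in the same component; the connecting word exhibits, in the first coordinate, a relation of the form $a_0\lhd^{\alpha_1}a_1\lhd\cdots=a_0$, and $(N)$ then transports it to the second coordinate, yielding equality of the pairs and hence $(T)$ for $f_1$. The main obstacle I anticipate is the bookkeeping in the sufficiency direction: I must be careful that an arbitrary element of the orbit is reached by a single word of elementary translations (using self-distributivity (Q3) to normalise composite translations into this shape) and that the first-coordinate relation produced is genuinely of the closed form $a_0\lhd^{\alpha_1}\cdots\lhd^{\alpha_n}a_n=a_0$ required to invoke $(N)$, rather than a relation between two a priori different group elements.
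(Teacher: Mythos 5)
Your proposal is correct and follows essentially the same route as the paper's proof: unpack normality as condition $(T)$ for the first projection $f_1\colon \Eq(f)\to A$, describe the connected components of $\Eq(f)$ as orbits under words of right translations by pairs in $\Eq(f)$, and match the resulting two-coordinate condition with $(N)$ in both directions. The bookkeeping worry you raise at the end is harmless, since $\Inn(\Eq(f))$ is by definition generated by the right translations, so every element of an orbit is already reached by a single word of the required form.
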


\begin{proof}
    By definition, $f \colon A \to B$ is a normal extension if and only if the first
    projection $f_1 \colon \Eq(f) \to~A$ in the following diagram is a trivial extension.
    \[\begin{tikzcd}
    \Eq(f) \ar[r,"f_2"] \ar[d,"f_1"'] & A \ar[d,"f"] \\
    A \ar[r,"f"'] & B
    \end{tikzcd}\]
    But $f_1 \colon \Eq(f) \to A$ is a trivial extension if and only if Condition $(T)$ holds:

    $\forall \, (a_0,a_0'), (x_0,x'_0) \in \Eq(f)$, if \[f_1(a_0,a'_0) = f_1(x_0,x'_0)\] (i.e. $a_0=x_0$) and
    \[\pi_0((a_0,a'_0)) = \pi_0((x_0,x'_0)), \] then \[(a_0,a'_0)=(x_0,x'_0).\]
    This translates to the following condition:

    $ \forall \, a_0,\ a'_0,\ x'_0$ such that $f(a_0)=f(a'_0)=f(x'_0)$, if there exists $(a_i,a'_i) \in \Eq(f)$ with
    $1 \leq i \leq n$ such that \[(a_0,a'_0) \lhd^{\alpha_1} (a_1,a'_1) \lhd^{\alpha_2} \dots \lhd^{\alpha_n} (a_n,a'_n) = (a_0,x'_0)\]
    then $(a_0,a'_0) = (a_0,x'_0)$, which means \[(a_0,a'_0) = (a_0,a'_0) \lhd^{\alpha_1} (a_1,a'_1) \lhd^{\alpha_2} \dots \lhd^{\alpha_n} (a_n,a'_n). \]
    Clearly, condition $(N)$ implies the previous condition, but it is also true that the previous condition
    implies $(N)$ since we can take $x'_0$ to be \[x'_0 = a'_0 \lhd^{\alpha_1} a'_1 \lhd^{\alpha_2} \dots \lhd^{\alpha_n} a'_n. \]
\end{proof}

    \begin{example}\label{Example central is not normal}
        Consider the involutive ($\lhd = \lhdi$) quandle $A$ given by
        \[\begin{tabular}{c|cccc}
        $\lhd$ & $a$ & $b$ & $c$ & $d$\\
        \hline
        $a$ & $a$ & $a$ & $a$ & $a$\\
        $b$ & $b$ & $b$ & $d$ & $b$\\
        $c$ & $c$ & $c$ & $c$ & $c$\\
        $d$ & $d$ & $d$ & $b$ & $d$
        \end{tabular}\]
        and the two-elements trivial quandle $X =\{x,y\}$. Now consider $f\colon A \to X$ defined by
        $f(a)=f(b)=f(d)=x$ and $f(c)=y$. It is not a normal extension since $f(a)=f(b)$ and $a \lhd c =a$ but
        $b \lhd c = d \neq b$. To see that it satisfies Condition $(C)$, it suffices to observe that elements with the same
        image by $f$ act in the same way, when we compute $\lhd$ with them on the right or, in other terms, when they give the same column in the
        composition table above.
    \end{example}

    Let us fix some notations. It is standard to write $\Qnd^\rightarrow$ for the category of arrows
    in $\Qnd$. The objects of $\Qnd^\rightarrow$ are the quandle homomorphisms and the arrows of $\Qnd^\rightarrow$ are the commutative
    squares in $\Qnd$. More formally, if $f\colon A\to B$ and $g\colon C\to D$ are two objects of $\Qnd^\rightarrow$, a morphism
    from $f$ to $g$ in $\Qnd^\rightarrow$ is a pair $(\alpha_1 \colon A \to C,\alpha_0\colon B \to D)$ such that
    $\alpha_0 \circ f =g\circ \alpha_1$, i.e.~the square
\[
\begin{tikzcd}
A \ar[r,"\alpha_1"] \ar[d,"f"'] & C \ar[d,"g"] \\
B \ar[r,"\alpha_0"'] & D
\end{tikzcd}
\]
commutes.

    We write $\Ext(\Qnd)$ for the full subcategory of $\Qnd^\rightarrow$ whose objects are the extensions.
    Similarly, $\CExt(\Qnd)$ (resp.~$\NExt(\Qnd)$) denotes the full subcategory of  $\Qnd^\rightarrow$ whose objects are
    central (resp.~normal) extensions. The category $\Ext(B)$ is the full subcategory of the comma category $(\Qnd \downarrow B)$
    determined by the extensions of $B$ (so that a morphism in $\Ext(B)$ is a commutative triangle). We will write $\TExt(B)$, $\CExt(B)$ and
    $\NExt(B)$ for the full subcategories of $\Ext(B)$ determined by the trivial extensions of $B$, central extensions of $B$, and normal
    extensions of $B$, respectively.

 The following inclusions are always true:
\[
\TExt(B) \subseteq \NExt(B) \subseteq \CExt(B) \subseteq \Ext(B).
\]
Note that the first inclusion above comes from the admissibility
of the adjunction. Indeed, for admissible adjunctions, the trivial
extensions are pullback-stable \cite[Proposition 4.1]{JK94}. For
this reason, the central and normal extensions also enjoy pullback
stability \cite[Proposition 4.3]{JK94}.

We now make some remarks on our two main problems.

\begin{enumerate}
\item To find a left adjoint to the inclusion functor $\CExt(\Qnd)
\hookrightarrow \Ext(\Qnd)$. We are looking here for a way to
transform universally any extension into a central extension. Due
to the pullback stability of central extensions, we can split this
problem into smaller problems: for each quandle $B$, find a left
adjoint to  the inclusion $\CExt(B) \hookrightarrow \Ext(B)$. In
other words, if $f\colon A \to B$ is an extension, we are
searching for a decomposition \[\begin{tikzcd}
A \arrow[rr, "f"] \arrow[rd, "q"'] & & B\\
& \overline{A} \arrow[ur,"c_f"'] &
    \end{tikzcd}\] where $c_f$ is central and  it is universal: if $f = c' \circ q'$ with $c' : A' \to B$ a central extension
    of $B$, there must exist a unique $\phi \colon \overline{A} \to A'$ such that $c'\circ \phi = c_f$ and $\phi \circ q = q'$. We will
    give two methods of doing so, one of which uses a categorical approach while the other uses an algebraic approach. It turns out that
    $q$ is always a regular epimorphism, so that our problem further reduces to finding an appropriate congruence $R_c$ on $A$.
\item To find a left adjoint to the inclusion functor $\NExt(\Qnd)
\hookrightarrow \Ext(\Qnd)$. Although, in principle, the same
procedure as above could be followed, we were not able to  tackle
the problem in that way. Instead, we use a general argument (the
Freyd Adjoint Functor Theorem) in order to solve the problem. Note
that this method can also be used to solve the first problem but
its drawback is that it is not really constructive: we are left
with no good information on how to construct the left adjoint we
are looking for, only its existence is proved.
\end{enumerate}


\section{Trivializing an extension}\label{Trivializing an extension}


 The adjunction \eqref{A} is said to be \emph{admissible} when the left adjoint functor $I \colon \catC \to \catH$ preserves pullbacks of the following form,
 where $H(f)$ is a surjective homomorphism:
 \begin{equation}\label{admipb}
 \begin{tikzcd}[row sep=3em]
 B \times_{HI(B)} H(X) \ar[r,"p_2"] \ar[d,"p_1"'] & H(X) \ar[d,"H(f)"] \\
 B \ar[r,"\eta_B"'] & HI(B).
 \end{tikzcd}
 \end{equation}
A famous example of an admissible adjunction is the
reflection~\eqref{adjgrpab} of the category $\Grp$ into $\Ab$. As
explained in \cite{JK94}, this fact is a consequence of an
important property of the variety $\Grp$ of groups: it is a
\emph{Mal'tsev variety}, i.e. a variety in which congruences over
any object \emph{permute} in the sense of the composition of
relations. Here we adopt the classical terminology used in
universal algebra and we call an equivalence relation $R \subseteq
A \times A$ on (the underlying set of) a quandle A, a
\emph{congruence}, if it has the property that $R$ is also a
subquandle of the product quandle $(A \times A,\lhd,\lhdi)$, so
that, for any $(a,b) \in R$ and $(a',b') \in R$, both the elements
\[
 (a, b) \lhd (a', b') = (a \lhd a', b \lhd b')
 \]
 and
 \[
 (a,b) \lhdi (a',b')=(a \lhdi a',b \lhdi b')
 \]
belong to the relation $R$.

    Given two congruences $R$ and $S$ on a quandle $A$, their (relational) composite $S \circ R$ is defined as the following relation on $A$:
    $$S \circ R = \{ (a,b) \in A \times A \, \mid \exists \, c \in A \  \mathrm{with}  \, (a,c) \in R\, \, \mathrm{and } \, \, (c, b) \in S \}.$$
    For a variety $\catC$ of algebras, having permutable congruences, meaning that we have $R \circ S = S \circ R$ for all congruences
    $R$ and $S$ on any object $C \in \catC$, is equivalent to the fact that the corresponding theory has a ternary term $p(a,b,c)$ such that
    $p(a,b,b)=a$ and $p(a,a,b)=b$ (see~\cite{Mal54}). For the variety of groups, such a ternary term is given by $p(a,b,c) = a \cdot b^{-1} \cdot c$.
    When the variety $\catC$ is a Mal'tsev variety, the adjunction~\eqref{A} is always an admissible adjunction (see \cite{JK94} for a proof of this
    result). An interesting aspect of the adjunction \eqref{adjqndt} is that, although the variety $\Qnd$ of quandles is not a Mal'tsev variety, one
    can still find sufficient congruences that permute, making the adjunction \eqref{adjqndt} an admissible adjunction. These congruences were
    named \emph{orbit congruences} in~\cite{BLRY10}, where they were introduced in a completely different context.
\begin{definition}
    For any subgroup $N$ of $\Inn(A)$, define the relation $\sim_N \subset A \times A$ by
    \[a \sim_N b \text{ if and only if there exists } n \in N \text{ such that } a^n = b.\]
\end{definition}
This relation is actually an equivalence relation. When moreover
$N$ is a normal subgoup of $\Inn(A)$, $\sim_N$ becomes a
congruence on $A$, called an orbit congruence (see Theorem $6.1$
in \cite{BLRY10}). One can show (Lemma $2.6$ in~\cite{EG14}) that
the orbit congruences in $\Qnd$ \emph{permute} with any other
reflexive relation $R$: $\sim_N \circ R = R \circ \sim_N$. Since
the kernel congruence $\Eq(\eta_A)$ of the $A$-component of the
unit of the adjunction~\eqref{adjqndt} $\eta_A \colon A
\rightarrow \pi_0 (A)$ is an orbit congruence (it is
$\sim_{\Inn(A)}$), the adjunction~\eqref{adjqndt} can be shown to
be admissible for Galois theory. The proof of these facts relies
on the two following results that we shall also use later on. Note
that the following Lemma is a simple modification of  Lemma~1.7 in
\cite{EG14}.

\begin{lemma}\label{specialPushouts}
    Let
    \begin{equation*}
    \begin{tikzcd}
    A \ar[r,"g"] \ar[d,"f"'] & C \ar[d,"\overline{f}"] \\
    B \ar[r,"\overline{g}"'] & D
    \end{tikzcd}
    \end{equation*}
    be a pushout of surjective homomorphisms in $\Qnd$ such that
    \[
    \Eq(f)\circ \Eq(g) = \Eq(g)\circ \Eq(f).
    \]
    Then the canonical factorization $(f,g) \colon A \rightarrow B \times_D C$ to the pullback of $\overline{f}$ and $\overline{g}$ is a surjective homomorphism.
\end{lemma}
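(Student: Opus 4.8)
The plan is to compute the pushout explicitly as a quotient of $A$ and then to use the permutability hypothesis to realise the join of the two kernel congruences as a single relational composite; surjectivity of the comparison map then amounts to producing one common preimage. I would begin by recording what the map in question is. The pullback $B\times_D C$ is the subquandle of $B\times C$ consisting of the pairs $(b,c)$ with $\overline{g}(b)=\overline{f}(c)$, and the canonical factorisation is $(f,g)\colon A\to B\times_D C$, $a\mapsto (f(a),g(a))$. That this is a quandle homomorphism comes for free, since it is the arrow induced from the cone $(f,g)$ by the universal property of the pullback; so the only genuine content is its surjectivity.

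Next I would identify the pushout. Writing $R=\Eq(f)$ and $S=\Eq(g)$, the surjectivity of $f$ and $g$ gives $B\cong A/R$ and $C\cong A/S$, and a routine verification of the universal property (valid in any variety) shows that the pushout is $D\cong A/(R\vee S)$, with the diagonal $h:=\overline{g}\circ f=\overline{f}\circ g$ corresponding to the quotient map $A\to A/(R\vee S)$. In particular $\Eq(h)=R\vee S$. The inclusion $R\vee S\subseteq\Eq(h)$ is automatic, and the reverse inclusion is exactly what the pushout property supplies.

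The key step is to invoke permutability. By hypothesis $R\circ S=S\circ R$, and a standard computation (using that $R$ and $S$ are symmetric, that $R\circ R=R$ and $S\circ S=S$ for equivalence relations, together with associativity) shows that this common composite is itself reflexive, symmetric and transitive, hence an equivalence relation, and in fact a congruence containing both $R$ and $S$; being contained in every congruence that contains $R$ and $S$, it must coincide with their join, so $R\vee S=S\circ R$. This is precisely the point where the hypothesis is indispensable: without permutability the join is merely the transitive closure of $R\cup S$, which is in general strictly larger than $S\circ R$, and the single common preimage produced below need not exist.

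Finally I would conclude by lifting. Given $(b,c)\in B\times_D C$, choose $a_1$ with $f(a_1)=b$ and $a_2$ with $g(a_2)=c$. The pullback condition $\overline{g}(b)=\overline{f}(c)$ translates into $h(a_1)=h(a_2)$, so $(a_1,a_2)\in\Eq(h)=R\vee S=S\circ R$. Unwinding the definition of the relational composite, there is an element $a_3\in A$ with $(a_1,a_3)\in R$ and $(a_3,a_2)\in S$, that is $f(a_3)=f(a_1)=b$ and $g(a_3)=g(a_2)=c$; hence $(f,g)(a_3)=(b,c)$ and $(f,g)$ is surjective. I expect the mildly delicate bookkeeping to be the identification of the given pushout with $A/(R\vee S)$ and of $\Eq(h)$ with $R\vee S$; once that is in place, the whole argument collapses to the single reduction $R\vee S=S\circ R$ afforded by permutability.
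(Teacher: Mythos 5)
Your proof is correct. The paper does not actually write out a proof of this lemma (it only remarks that it is a simple modification of Lemma~1.7 of \cite{EG14}), and your argument --- identifying the pushout as $A/(\Eq(f)\vee\Eq(g))$, using permutability to get $\Eq(f)\vee\Eq(g)=\Eq(g)\circ\Eq(f)$, and then producing a common lift $a_3$ for any $(b,c)$ in the pullback --- is precisely the standard argument behind that cited result, with the pushout hypothesis correctly isolated as the source of the inclusion $\Eq(\overline{g}\circ f)\subseteq\Eq(f)\vee\Eq(g)$.
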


\begin{corollary}\label{QuandleAdj} \cite{EG14}
    For any surjective homomorphism $f \colon A \rightarrow B$ in $\Qnd$ the commutative square
    \begin{equation}\label{canonical}
    \begin{tikzcd}
    A \ar[r,"\eta_A"] \ar[d,"f"'] & \pi_0(A) \ar[d,"\pi_0(f)"] \\
    B \ar[r,"\eta_B"'] & \pi_0(B)
    \end{tikzcd}
    \end{equation}
    where $\eta$ is the unit of the adjunction \eqref{adjqndt}
    has the property that the canonical arrow~$\langle f, \eta_A \rangle \colon A \rightarrow  B {\times}_{\pi_0 (B)} \pi_0(A)$ to the pullback
    (of $\pi_0 (f)$ and $\eta_B$) is surjective.
\end{corollary}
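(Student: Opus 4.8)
The plan is to deduce the statement as a direct application of Lemma~\ref{specialPushouts} to the naturality square~\eqref{canonical}. With the notation of that lemma, I take the top and left edges to be $\eta_A \colon A \to \pi_0(A)$ and $f \colon A \to B$, so that the remaining edges become $\overline{f} = \pi_0(f)$ and $\overline{g} = \eta_B$, and the canonical comparison $(f,\eta_A)$ is exactly the arrow $\langle f, \eta_A \rangle \colon A \to B \times_{\pi_0(B)} \pi_0(A)$ appearing in the statement. It therefore suffices to check that the hypotheses of Lemma~\ref{specialPushouts} hold: that~\eqref{canonical} is a pushout of surjective homomorphisms, and that the kernel congruences $\Eq(f)$ and $\Eq(\eta_A)$ permute.

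Surjectivity is immediate: $f$ is surjective by assumption, the components $\eta_A$ and $\eta_B$ of the unit are surjective because~\eqref{adjqndt} is a (regular epi)-reflection, and $\pi_0(f)$ is surjective since a surjective homomorphism maps the connected components of $A$ onto those of $B$. The permutability of the congruences is the conceptual heart of the argument, and it is exactly where the special nature of the reflection~\eqref{adjqndt} enters: as recalled above, $\Eq(\eta_A)$ coincides with the orbit congruence $\sim_{\Inn(A)}$, and orbit congruences permute with every reflexive relation (Lemma~2.6 in~\cite{EG14}). Since $\Eq(f)$ is a congruence, hence reflexive, we obtain $\Eq(f) \circ \Eq(\eta_A) = \Eq(\eta_A) \circ \Eq(f)$ at once.

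The remaining, and principal, task is to verify that~\eqref{canonical} is genuinely a pushout in $\Qnd$. I would do this directly from the universal property. Given a quandle $Q$ together with homomorphisms $u \colon \pi_0(A) \to Q$ and $v \colon B \to Q$ satisfying $u \circ \eta_A = v \circ f$, the point is to show that $v$ is constant on the connected components of $B$, and hence factors uniquely through $\eta_B$. Indeed, for $b = f(a)$ and $b_1 = f(a_1)$ one has $b \lhd b_1 = f(a \lhd a_1)$, and since $a$ and $a \lhd a_1$ lie in the same component of $A$, the hypothesis $v \circ f = u \circ \eta_A$ gives $v(b \lhd b_1) = u(\eta_A(a \lhd a_1)) = u(\eta_A(a)) = v(b)$; the same computation with $\lhdi$ shows that $v$ is constant along right translations, hence along whole orbits of $\Inn(B)$. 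The induced homomorphism $w \colon \pi_0(B) \to Q$ is then the required factorization; the identity $w \circ \pi_0(f) = u$ follows by precomposing with the epimorphism $\eta_A$ and using naturality, and uniqueness of $w$ follows because $\eta_B$ is an epimorphism.

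Once these hypotheses are in place, Lemma~\ref{specialPushouts} applies verbatim and yields that $\langle f, \eta_A \rangle \colon A \to B \times_{\pi_0(B)} \pi_0(A)$ is surjective, which is the assertion. I expect the verification of the pushout property to be the only genuinely non-formal step; the congruence permutability, though it is the reason the whole mechanism works, is handed to us by the orbit-congruence results of~\cite{BLRY10,EG14}, and surjectivity is routine.
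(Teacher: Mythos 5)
Your argument is correct and is precisely the route the paper intends: the statement is presented as a corollary of Lemma~\ref{specialPushouts} (with details deferred to \cite{EG14}), obtained by checking that the naturality square is a pushout of surjective homomorphisms and that $\Eq(\eta_A)=\sim_{\Inn(A)}$, being an orbit congruence, permutes with the reflexive relation $\Eq(f)$. Your direct verification of the pushout property (constancy of $v$ on $\Inn(B)$-orbits via surjectivity of $f$, plus the epimorphism arguments for $w\circ\pi_0(f)=u$ and for uniqueness) is sound, so nothing is missing.
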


Since our adjunction~\eqref{adjqndt} is admissible, there exists a
left adjoint to the inclusion functor $\TExt(B) \hookrightarrow
\Ext(B)$ (see  \cite{JK94}). Actually, given a surjective quandle
homomorphism $f \colon A \to B$, we can find a congruence $R_t$ on
$A$ such that $t \colon A/R_t \to B$ in the factorization
 \[
 \begin{tikzcd}
 A \ar[rr,"f"] \ar[rd] & & B\\
 & A/R_t \ar[ur,dotted,"t"'] &
 \end{tikzcd}
\]
is a universal trivial extension. The trivial extension $t$ is the
pullback of $\pi_0(f)$ along $\eta_B$ and $R_t=\Eq(f) \cap
\sim_{\Inn(A)}$ is the kernel pair of the comparison morphism
$\langle f, \eta_A \rangle \colon A \to B \times_{\pi_0(B)}
\pi_0(A)$ so that $A \to A/R_t = A \to B \times_{\pi_0(B)}
\pi_0(A)$ (see \cite{EG14}).


\section{Centralizing an extension}\label{Centralizing an extension}


This section is devoted to the proof of our main result, namely
the construction of the centralization of a quandle extension. We
first describe a general construction by using categorical
arguments, then we give a concrete and algebraic  description of
the centralization.

The first construction we consider is based on the method used by
T.~Everaert~\cite{Eve14} to centralize extensions in the case of
an adjunction~\eqref{A} where the variety $\catC$ is a Mal'tsev
variety. We will show that the same method still works for the
adjunction~\eqref{adjqndt} by pointing out a weaker condition than
the permutability of all congruences.

    \begin{lemma}\label{341}
        Consider the following pullback
        \begin{equation}\label{3511}
        \begin{tikzcd}
        E \times_B A \ar[r,"p_2"] \ar[d,"p_1"'] & A \ar[d,"f"]\\
        E \ar[r,"p"'] & B
        \end{tikzcd}
        \end{equation}
        where $f \colon A \to B$ is a surjective quandle homomorphism and $p \colon E \to B$ is a normal extension.
        Then \[\Eq(p_2) \circ (\Eq(p_1) \cap \sim_{\Inn(E \times_B A)}) = (\Eq(p_1) \cap \sim_{\Inn(E \times_B A)}) \circ \Eq(p_2). \]
    \end{lemma}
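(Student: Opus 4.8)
The plan is to reduce the claimed commutativity of congruences to the permutability property of orbit congruences already recorded in the excerpt. Recall that the paper states (citing Lemma 2.6 of \cite{EG14}) that an orbit congruence $\sim_N$ permutes with \emph{any} reflexive relation $R$, i.e. $\sim_N \circ R = R \circ \sim_N$. The congruence $\sim_{\Inn(E\times_B A)}$ is exactly such an orbit congruence on the quandle $E\times_B A$, so the general permutability result is the tool I expect to carry the argument. The subtlety is that the left-hand side of the desired equation does not involve $\sim_{\Inn(E\times_B A)}$ alone but rather its \emph{intersection} with $\Eq(p_1)$, so I cannot directly quote the general lemma; I need to understand $\Eq(p_1)\cap \sim_{\Inn(E\times_B A)}$ better.

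First I would identify this intersection concretely. Since $p\colon E\to B$ is a normal extension and the square \eqref{3511} is a pullback, I would use the pullback-stability of normal (hence trivial-behaving) extensions together with the description of trivial extensions via condition \eqref{T}. In fact, the key structural observation I would aim to establish is that on the pullback $E\times_B A$ the relation $\Eq(p_1)\cap\sim_{\Inn(E\times_B A)}$ coincides with (or is governed by) an orbit congruence associated to a suitable normal subgroup of $\Inn(E\times_B A)$ — intuitively, the inner automorphisms that move only the $A$-coordinate while fixing the $E$-coordinate up to the congruence. The point of normality of $p$ is precisely to guarantee that the action realizing $\sim_{\Inn(E\times_B A)}$ restricts compatibly to the fibres of $p_1$, so that intersecting with $\Eq(p_1)$ again yields an orbit-type congruence rather than an arbitrary subrelation.

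Once I have expressed $\Eq(p_1)\cap\sim_{\Inn(E\times_B A)}$ as an orbit congruence $\sim_N$ for an appropriate normal subgroup $N\trianglelefteq\Inn(E\times_B A)$, the conclusion follows immediately: $\Eq(p_2)$ is reflexive, so by the permutability of orbit congruences with reflexive relations we get
\[
\Eq(p_2)\circ\sim_N \;=\; \sim_N\circ\,\Eq(p_2),
\]
which is exactly the asserted equality. I would present this final step as a direct invocation of the quoted Lemma~2.6 of \cite{EG14}.

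The hard part will be the middle step: proving that the intersection $\Eq(p_1)\cap\sim_{\Inn(E\times_B A)}$ really is an orbit congruence and, crucially, that normality of $p$ is what makes this work. Concretely, given $((e,a),(e,a'))$ in the intersection, the relation $\sim_{\Inn(E\times_B A)}$ supplies an inner automorphism carrying $(e,a)$ to $(e,a')$, built from right translations $\rho_{(e_i,a_i)}$; I must show this automorphism can be chosen to fix the $E$-coordinate, i.e. that the corresponding word in $\Inn(E)$ fixes $e$. This is a normality condition on $p$ translated through the pullback — the equation defining normality in the Introduction (a word $\lhd^{\alpha_1}\cdots\lhd^{\alpha_n}$ fixing $e$ in $E$ forcing the analogous fixing downstairs) is exactly what lets me control the $E$-coordinate. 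Handling the bookkeeping of these inner-automorphism words, and verifying that the resulting set $N$ of $A$-moving automorphisms is a \emph{normal} subgroup of $\Inn(E\times_B A)$, is where the genuine work lies; I expect to mirror the proof of Lemma~1.7 in \cite{EG14} and the orbit-congruence machinery of \cite{BLRY10}.
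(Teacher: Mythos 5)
There is a genuine gap at the heart of your plan: everything rests on the claim that $\Eq(p_1)\cap\sim_{\Inn(E\times_B A)}$ is an orbit congruence $\sim_N$ for some normal subgroup $N\trianglelefteq\Inn(E\times_B A)$, and this claim is neither proved nor, as far as I can see, obtainable by the mechanism you describe. Note first that if $\sim_N\subseteq\Eq(p_1)$ then, since $p_1$ is surjective, \emph{every} element of $N$ must induce the identity automorphism on all of $E$; so the only admissible candidates for $N$ sit inside $\Ker\bigl(\Inn(E\times_B A)\to\Inn(E)\bigr)$. But for a given pair $\bigl((e,a),(e,a')\bigr)$ in the intersection, the witnessing inner automorphism $w=\rho_{(e_1,a_1)}^{\alpha_1}\cdots\rho_{(e_n,a_n)}^{\alpha_n}$ only satisfies $\bar w(e)=e$, and normality of $p$ upgrades this merely to ``$\bar w$ fixes the fibre $p^{-1}(p(e))$ pointwise'' (condition $(N)$ is fibrewise), not to ``$\bar w=\id_E$''. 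So $w$ need not lie in any admissible $N$, and you give no procedure for replacing it by an inner automorphism that acts trivially on the whole of $E$ yet still carries $(e,a)$ to $(e,a')$. The ``middle step'' you defer is thus not bookkeeping: it is the entire content, and the reduction you propose appears to aim at a stronger statement than what is true.

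The paper's proof sidesteps this completely with a short element chase that uses normality exactly in its fibrewise form. Given $\bigl((e,a),(e',a')\bigr)\in\Eq(p_2)\circ\bigl(\Eq(p_1)\cap\sim_{\Inn(E\times_B A)}\bigr)$, the middle element is forced to be $(e,a')$, whence $p(e)=f(a')=p(e')$, and there is a word with $(e,a)\lhd^{\alpha_1}(e_1,a_1)\cdots\lhd^{\alpha_n}(e_n,a_n)=(e,a')$; projecting gives $e\lhd^{\alpha_1}e_1\cdots\lhd^{\alpha_n}e_n=e$, so Proposition~\ref{normal} (with the \emph{same} letters $e_i$) yields $e'\lhd^{\alpha_1}e_1\cdots\lhd^{\alpha_n}e_n=e'$. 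Applying the original word to the new base point $(e',a)$ then lands on $(e',a')$, exhibiting $(e,a)\,\Eq(p_2)\,(e',a)\,\bigl(\Eq(p_1)\cap\sim_{\Inn(E\times_B A)}\bigr)\,(e',a')$; the reverse inclusion follows by symmetry of the relations. The moral is that one does not modify the word to kill its action on $E$ (your plan); one keeps the word and changes the base point, letting normality guarantee the first coordinate still returns to itself. I recommend abandoning the orbit-congruence reduction and arguing directly along these lines.
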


    \begin{proof}
        If $((e,a),(e',a')) \in \Eq(p_2) \circ (\Eq(p_1) \cap \sim_{\Inn(E \times_B A)})$ then there exists
        $(\epsilon, \alpha) \in E \times_B A$ such that \[(e,a) (\Eq(p_1) \cap \sim_{\Inn(E \times_B A)} ) (\epsilon,\alpha) \Eq(p_2) (e',a').\]
        Thus we have $ (\epsilon,\alpha) = (e,a')$ with $p(e) = f(a')$ and
        \[\pi_0(e,a') = \pi_0(e,a).\] The first condition implies that $p(e) = f(a') = p(e')$ while the second
        condition implies the existence of elements $(e_i,a_i) \in E \times_B A$ and $\alpha_i \in \{-1,1\}$ with $1 \leq i \leq n$
        such that \[(e,a) \lhd^{\alpha_1} (e_1,a_1) \lhd^{\alpha_2} (e_2,a_2) \dots \lhd^{\alpha_n} (e_n,a_n) = (e,a').\] In particular,
        this shows that \[p(e) = p(e')\] and \[e \lhd^{\alpha_1} e_1 \lhd^{\alpha_2} e_2 \dots \lhd^{\alpha_n} e_n = e.\] Since
        $p \colon E \to B$ is a normal extension, Proposition~\ref{normal} implies that
        \[e' \lhd^{\alpha_1} e_1 \lhd^{\alpha_2} e_2 \dots \lhd^{\alpha_n} e_n = e'.\]
        Now we have that \[(e,a) \Eq(p_2) (e',a)\] and \[(e',a) \lhd^{\alpha_1} (e_1,a_1) \lhd ^{\alpha_2} (e_2,a_2) \dots \lhd^{\alpha_n} (e_n,a_n) = (e',a')\]
        which implies that \[(e',a) (\Eq(p_1) \cap \sim_{\Inn(E \times_B A)}) (e',a')\] and thus
        \[((e,a),(e',a')) \in (\Eq(p_1) \cap \sim_{\Inn(E \times_B A)}) \circ \Eq(p_2). \]
    \end{proof}

        Note that this result does not remain true if $p$ is not asked to be a normal extension.

        \begin{example}
            Consider the involutive quandle $A$ given by the following table :
            \[\begin{tabular}{c|cccc}
            $\lhd$ & $a$ & $b$ & $c$ & $d$ \\
            \hline
            $a$ & $a$ & $a$ & $a$ & $b$ \\
            $b$ & $b$ & $b$ & $b$ & $a$ \\
            $c$ & $c$ & $c$ & $c$ & $c$\\
            $d$ & $d$ & $d$ & $d$ & $d$
            \end{tabular}\] and $B=\{x,y\}$ the two-elements trivial quandle. Now define a quandle homomorphism
            $f \colon A \to B$ by $f(a)=f(b)=f(c)=x$ and $f(d)=y$. This quandle homomorphism has a \emph{section} $s \colon B \to A$
            (this means that $f \circ s = \id_B$) defined by $s(x)=c$ and $s(y)=d$.
            Let us consider the kernel congruence of $f$, which is given by the set $\Delta \cup
            \{(a,b),(a,c),(b,a),(b,c),(c,a),(c,b)\}$, where
            $\Delta$ is the diagonal of $A$, representing the
            reflexivity part of the relation: $\Delta = \{  (x, x)
            \, | \, x \in A \}$.

            If we compute the intersection $\Eq(p_1) \cap \sim_{\Inn(A \times_B A)}$, we get \[\Delta \cup \{((c,a),(c,b)),((c,b),(c,a))\}.\]
            The idea is that members of $\Eq(p_1)$ share the same first element but the only couple that acts non trivially via the operation
            $\lhd$ is $(d,d)$ which changes the first entry of an element $(s,t)$ whenever $s = a$ or $b$. This implies that the only remaining
            elements are the elements from the diagonal and elements with ``$c$'' in the first entry.

            Now let us consider the element $((b,a),(c,b))$. This element is in $$(\Eq(p_1)\cap \sim_{\Inn(A \times_B A)}) \circ \Eq(p_2)$$ since
            \[(b,a) \Eq(p_2) (c,a) (\Eq(p_1)\cap \sim_{\Inn(A \times_B A)}) (c,b).\] But $((b,a),(c,b))$  doesn't belong to
            $\Eq(p_2) \circ (\Eq(p_1)\cap \sim_{\Inn(A \times_B A)})$ since $$(b,a) (\Eq(p_1)\cap \sim_{\Inn(A \times_B A)}) (b,a)$$ is the only
            choice in $(\Eq(p_1)\cap \sim_{\Inn(A \times_B A)})$.
        \end{example}

        This permutability property is crucial for the construction, since it allows us to show that the image of a congruence remains a congruence.

    \begin{proposition}\label{imageequivalence}
        Let $f \colon A \to B$ be a surjective quandle homomorphism and $R$ a congruence on $A$. Then $f(R)$ is a congruence
        when $R \circ \Eq(f) = \Eq(f) \circ R$.
    \end{proposition}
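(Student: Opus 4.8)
The plan is to verify separately that $f(R)$ is a subquandle of $B \times B$ and that it is an equivalence relation on $B$, isolating the permutability hypothesis as the only ingredient actually needed, which enters solely in the proof of transitivity.

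First I would dispose of the subquandle condition, which requires no hypothesis on the composites. Since $f$ is a quandle homomorphism, so is $f \times f \colon A \times A \to B \times B$, and $f(R)$ is precisely the image $(f \times f)(R)$. As the image of a subquandle under a quandle homomorphism, it is automatically closed under $\lhd$ and $\lhdi$: given $(f(a),f(b))$ and $(f(a'),f(b'))$ in $f(R)$ with $(a,b),(a',b') \in R$, the element $(a,b) \lhd (a',b') = (a \lhd a', b \lhd b')$ lies in $R$ because $R$ is a subquandle, and applying $f \times f$ shows that $(f(a),f(b)) \lhd (f(a'),f(b'))$ lies in $f(R)$; the argument for $\lhdi$ is identical.

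Next I would check that $f(R)$ is an equivalence relation. Reflexivity follows from the surjectivity of $f$ together with the reflexivity of $R$: any $b \in B$ equals $f(a)$ for some $a \in A$, and $(a,a) \in R$ yields $(b,b) \in f(R)$. Symmetry is immediate from the symmetry of $R$. The only delicate point, and the step where the hypothesis $R \circ \Eq(f) = \Eq(f) \circ R$ is used, is transitivity.

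For transitivity, suppose $(b_1,b_2)$ and $(b_2,b_3)$ lie in $f(R)$, say $(b_1,b_2) = (f(a),f(a'))$ and $(b_2,b_3) = (f(c),f(c'))$ with $(a,a'),(c,c') \in R$. Then $f(a') = b_2 = f(c)$, so $(a',c) \in \Eq(f)$, and chaining $(a,a') \in R$ with $(a',c) \in \Eq(f)$ gives $(a,c) \in \Eq(f) \circ R$. Invoking permutability, $(a,c) \in R \circ \Eq(f)$, so there exists $m \in A$ with $(a,m) \in \Eq(f)$ and $(m,c) \in R$. From $(m,c) \in R$, $(c,c') \in R$ and transitivity of $R$ we obtain $(m,c') \in R$, while $(a,m) \in \Eq(f)$ forces $f(m) = f(a) = b_1$. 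Hence $(b_1,b_3) = (f(m),f(c')) \in f(R)$, as required. I expect this final paragraph to be the crux: every other property holds for an arbitrary surjective $f$, and it is exactly the permutability of $R$ with $\Eq(f)$ that allows one to move the connecting $\Eq(f)$-step past the first $R$-step, so that the two $R$-steps become adjacent and can be fused by the transitivity of $R$.
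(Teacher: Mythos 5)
Your proof is correct and follows essentially the same route as the paper's: the subquandle, reflexivity and symmetry parts are noted as automatic, and permutability is used exactly once, to slide the connecting $\Eq(f)$-step past an $R$-step so that the two $R$-steps become adjacent and can be fused by transitivity of $R$. The only (inessential) difference is that you push the $\Eq(f)$-link to the left end of the chain, landing on a new preimage of $b_1$, whereas the paper pushes it to the right end, landing on a new preimage of $b_3$.
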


    \begin{proof}
        It is easy to see in general that the image of a congruence is still a reflexive and symmetric relation compatible with the operations.

Suppose that $R \circ \Eq(f) = \Eq(f) \circ R$. We have to show
that $f(R)$ is a transitive relation. For this, let $(b_1,b_2)$
and $(b_2,b_3)$ be elements of $f(R)$, then there exist $(a_1,a_2)
\in R$ such that $f(a_1)=b_1$ and $f(a_2)=b_2$ and $(a_2',a_3) \in
R$ such that $f  (a_2') = b_2$ and $f(a_3)=b_3$. In particular, we
see that $(a_2,a_2') \in \Eq(f)$. It follows that $(a_2,a_3) \in R
\circ \Eq(f)$ (since $a_2 \Eq(f) a_2' Ra_3$) but then $(a_2,a_3)
\in \Eq(f) \circ R$ which means that there exists $z \in A$ such
that $a_2 R z \Eq(f) a_3$. Remark that this implies that $f(z) =
f(a_3)=b_3$ and thus, since $R$ is transitive, $(a_1,z) \in R$,
showing that $(f(a_1),f(z)) =(b_1,b_3) \in f(R)$.
    \end{proof}

        \begin{corollary}\label{343}
            Consider the pullback~\eqref{3511} where $f \colon A \to B$ is a surjective quandle homomorphism and $p \colon E \to B$ is a normal
            extension. Then the relation $p_2(\Eq(p_1)\cap \sim_{\Inn(E \times_B A)})$ is a congruence on the quandle $A$.
        \end{corollary}

        \begin{proof}
            This follows directly from Lemma~\ref{341} and Proposition~\ref{imageequivalence}.
        \end{proof}

We are now in the position to prove our main theorem.

\begin{theorem}
            The category $\CExt(B)$ of central extensions of $B$ is a (regular epi)-reflective subcategory of the category $\Ext(B)$.
        \end{theorem}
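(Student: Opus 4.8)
The plan is to construct the reflection explicitly, following the recipe known for Mal'tsev varieties but replacing permutability of \emph{all} congruences by the weaker information gathered in Lemma~\ref{341} and Corollary~\ref{343}. Fix an extension $f\colon A\to B$ and choose the weakly universal normal extension $p\colon E\to B$ of $B$ recalled in Section~\ref{section central and normal extensions} (so $p$ is normal and every central extension of $B$ is split by it). Forming the pullback~\eqref{3511}, Corollary~\ref{343} guarantees that $R_c:=p_2\bigl(\Eq(p_1)\cap\sim_{\Inn(E\times_B A)}\bigr)$ is a congruence on $A$. First I would check that $R_c\subseteq\Eq(f)$: if $(a,a')\in R_c$ it is witnessed by a pair $((e,a),(e,a'))\in\Eq(p_1)\cap\sim_{\Inn(E\times_B A)}$, and membership in $E\times_B A$ forces $f(a)=p(e)=f(a')$. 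Hence $f$ factors as $f=c_f\circ q$ through the quotient $q\colon A\to \overline A:=A/R_c$, with $q$ a regular epimorphism and $c_f$ surjective. This $q$ will be the component of the unit at $f$.

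Next I would prove that $c_f$ is central, i.e.\ split by $p$. By the characterisation~\eqref{T} of trivial extensions, it suffices to show that the projection $\overline{p_1}\colon E\times_B\overline A\to E$ is trivial. So suppose $(e,[a])$ and $(e,[a'])$ lie over the same $e$ and in the same $\Inn(E\times_B\overline A)$-orbit; the orbit path is a word in right translations, say $(e,[a])\lhd^{\alpha_1}(e_1,[b_1])\lhd^{\alpha_2}\cdots\lhd^{\alpha_n}(e_n,[b_n])=(e,[a'])$. Using surjectivity of $q$ I would lift each $[b_i]$ to $b_i\in A$ (and $[a]$ to $a$) with $f$-values matching the corresponding $E$-coordinates, so that the very same word makes sense in $E\times_B A$ and produces a pair in $\Eq(p_1)\cap\sim_{\Inn(E\times_B A)}$ whose $p_2$-image $(a,\,a\lhd^{\alpha_1}b_1\cdots\lhd^{\alpha_n}b_n)$ lies in $R_c$; projecting back gives $[a]=[a']$. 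This is exactly triviality of $\overline{p_1}$, so $c_f$ lies in $\CExt(B)$.

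It remains to establish the universal property. Let $c'\colon A'\to B$ be a central extension with $c'\circ q'=f$ for some $q'\colon A\to A'$; I must produce a unique $\phi\colon\overline A\to A'$ with $\phi\circ q=q'$ and $c'\circ\phi=c_f$. Since $q$ is a (regular) epimorphism and $\overline A=A/R_c$, this amounts to proving $R_c\subseteq\Eq(q')$. Here the weak universality of $p$ enters: $c'$ is split by $p$, so the projection $p_1'\colon E\times_B A'\to E$ is a \emph{trivial} extension. Given $(a,a')\in R_c$, witnessed as above by an orbit path in $E\times_B A$, I would apply the homomorphism $q'$ coordinate-wise to transport this path into $E\times_B A'$, obtaining $((e,q'(a)),(e,q'(a')))\in\Eq(p_1')\cap\sim_{\Inn(E\times_B A')}$. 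As this is the trivialisation congruence of the trivial extension $p_1'$, condition~\eqref{T} forces it to be the diagonal, whence $q'(a)=q'(a')$. Thus $R_c\subseteq\Eq(q')$, the map $\phi$ is induced and unique, and $c'\circ\phi=c_f$ follows by cancelling the epimorphism $q$ in $c'\circ\phi\circ q=c'\circ q'=f=c_f\circ q$. As every component $q$ of the unit is a regular epimorphism, this exhibits $\CExt(B)$ as a (regular epi)-reflective subcategory of $\Ext(B)$.

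The genuine obstacle is already absorbed into Corollary~\ref{343}: without the permutability supplied by the normality of $p$ (Lemma~\ref{341}), the candidate relation $R_c$ need not be transitive and the construction collapses. Granting that $R_c$ is a congruence, the two remaining verifications---centrality of $c_f$ and the inclusion $R_c\subseteq\Eq(q')$---run on the same engine, namely lifting an inner-automorphism word through $q$ (resp.\ $q'$) and invoking~\eqref{T}; the only delicate point there is that it is precisely the weak universality of $p$ which makes $p_1'$ trivial and thereby powers the universality half of the argument.
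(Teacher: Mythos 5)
Your construction coincides with the paper's: both take the weakly universal (normal) extension $p \colon E \to B$, form the pullback~\eqref{3511}, and define the centralizing congruence as $R_c = p_2\bigl(\Eq(p_1)\cap \sim_{\Inn(E\times_B A)}\bigr)$, with Lemma~\ref{341} and Corollary~\ref{343} doing the real work of making this a congruence. Where you diverge is in the verification. The paper proceeds diagrammatically: it shows the square formed by $t$, $p_2$, $h$, $q$ is simultaneously a pushout and a pullback (using Lemma~\ref{specialPushouts} together with Lemma~\ref{341} for surjectivity of the comparison map, and joint monomorphy of $(p_1,p_2)$ for injectivity), then invokes \cite[Proposition 2.7]{JK94} to descend the trivial extension $l$ to a trivial extension of $c_f$ along $p$, and derives universality from the universal property of the trivialization $l\circ t$ combined with the pushout property. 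You instead verify both centrality of $c_f$ and the inclusion $R_c\subseteq\Eq(q')$ by hand, lifting or pushing inner-automorphism words through $q$ (respectively $q'$) and appealing to the explicit characterization~\eqref{T} of trivial extensions; your argument for universality also makes the role of the weak universality of $p$ (namely, that $p'_1$ is trivial) completely explicit, whereas in the paper it appears only as the remark that $s_1$ is trivial. Your element-wise checks are correct --- in particular the lifts exist because $q$ and $\id_E\times q$ are surjective, and the transported words witness membership in the relevant trivialization congruences --- so what you lose in categorical generality (the paper's pushout-pullback square is reusable machinery) you gain in concreteness: your proof makes visible exactly which orbit identifications $R_c$ enforces and why no central quotient can avoid them.
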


        \begin{proof}
            Let $f \colon A \to B$ be a surjective quandle homomorphism and consider the weakly universal central extension
            $p \colon \tilde{B} \to B$, whose existence has been recalled in Section \ref{section central and normal extensions}. Take the
            pullback of $f$ along $p$:
            \begin{equation*}
            \begin{tikzcd}
            \tilde{B} \times_B A \ar[r,"p_2"] \ar[d,"p_1"'] & A \ar[d,"f"] \\
            \tilde{B} \ar[r,"p"'] & B.
            \end{tikzcd}
            \end{equation*}
            In order to simplify the notation, let us write \[\cap = \Eq(p_1)\cap \sim_{\Inn(\tilde{B} \times_B A)}. \] The congruence
            $\cap$ is actually the congruence that trivializes $p_1$, i.e. such that $l \colon (\tilde{B} \times_B A)/\cap \to \tilde{B}$
            is the universal trivial extension associated with $p_1$.
            Now, since $p \colon \tilde{B} \to B$ is a normal extension, we already
            know from Corollary~\ref{343} that $p_2(\cap)$ is a congruence on
            the quandle $A$. Now consider the following diagram
            \begin{equation*}
            \begin{tikzcd}[column sep=small]
            \cap \ar[d,shift left=0.7ex,"t_2"] \ar[d,shift right=0.7ex,"t_1"'] \ar[rr,"\phi"] & & p_2(\cap) \ar[d,shift left=0.7ex,"q_2"]
            \ar[d,shift right=0.7ex,"q_1"'] & \\
            \tilde{B} \times_B A \ar[rr,"p_2"] \ar[dd,"p_1"'] \ar[rd,"t"] & & A \ar[dd,"f"] \ar[rd,"q"]  & \\
            &  (\tilde{B} \times_B A)/\cap \ar[ld,"l"] & & A /p_2(\cap) \\
            \tilde{B} \ar[rr,"p"'] & & B.
            \end{tikzcd}
            \end{equation*}
            We first observe that since
            \begin{align*}
            q \circ p_2 \circ t_1 &= q \circ q_1 \circ \phi\\
            &= q \circ q_2 \circ \phi \\
            &= q \circ p_2 \circ t_2
            \end{align*}
            there exists a unique quandle homomorphism $h \colon (\tilde{B} \times_B A)/\cap \to A/ p_2(\cap)$ such that $h \circ t = q \circ p_2$.

            Also, since $\phi \colon \cap \to p_2(\cap)$ is a surjective quandle homomorphism and \linebreak $f \circ q_1 \circ \phi = f \circ q_2 \circ \phi$,
            we have a unique quandle homomorphism $c_f \colon A/p_2(\cap) \to B$ such that $c_f \circ q = f$.

            Now we observe that the square of surjective homomorphisms
            \begin{equation}\label{squarepushout}
            \begin{tikzcd}
            \tilde{B} \times_B A \ar[r,"p_2"] \ar[d,"t"'] & A \ar[d,"q"]\\
            (\tilde{B} \times_B A)/\cap \ar[r,"h"'] & A/p_2(\cap)
            \end{tikzcd}
            \end{equation}
            is a pushout since $\phi \colon \cap \to p_2(\cap)$, $t \colon \tilde{B} \times_B A \to (\tilde{B} \times_B A)/\cap$ and
            $q \colon A \to A/P_2(\cap)$ are surjective quandle homomorphisms. By Lemma~\ref{341} and Lemma~\ref{specialPushouts}, this
            implies that the comparison morphism $\langle t,p_2 \rangle\colon \tilde{B} \times_B A\to  (\tilde{B} \times_B A)/\cap \times_{A/p_2(\cap)} A$
            is surjective. $(p_1,p_2)$ being a jointly monomorphic pair, the pair $(t,p_2)$ is also jointly monomorphic and $\langle t,p_2\rangle$ is
            injective. Consequently, the square~\eqref{squarepushout} is a pullback.

            Now we have the following situation
            \begin{equation*}
            \begin{tikzcd}
            \tilde{B} \times_B A \ar[rd,"t"] \ar[rr,"p_2"] \ar[dd,"p_1"'] & & A \ar[dd,near start,"f"] \ar[rd,"q"] & \\
            & (\tilde{B} \times_B A)/\cap \ar[rr,near start,crossing over,"h"'] \ar[ld,"l"] & & A/p_2(\cap) \ar[ld,"c_f"] \\
            \tilde{B} \ar[rr,"p"'] & & B &
            \end{tikzcd}
            \end{equation*}
            where the back square and the top square are pullbacks. Since $q \colon A \to A/p_2(\cap)$ is a surjective quandle homomorphism, the square
            \[\begin{tikzcd}
            (\tilde{B} \times_B A)/\cap \ar[r,"h"] \ar[d,"l"'] & A/p_2(\cap) \ar[d,"c_f"] \\
            \tilde{B} \ar[r,"p"'] & B
            \end{tikzcd}\] is also a pullback \cite[Proposition 2.7]{JK94}. Since $l$ is a trivial extension, this means that
            $c_f \colon A/p_2(\cap) \to B$ is a central extension.

            All is left to show now is the universality of the construction. For this, consider the following factorization
            \begin{equation*}
            \begin{tikzcd}
            A \ar[rr,"f"] \ar[rd,"u"'] & & B \\
            & C \ar[ur,"c"'] &
            \end{tikzcd}
            \end{equation*}
            where $c \colon C \to B$ is a central extension. Take the pullback of $c$ along $p$
            \begin{equation*}
            \begin{tikzcd}
            \tilde{B} \times_B C \ar[r,"s_2"] \ar[d,"s_1"'] & C \ar[d,"c"] \\
            \tilde{B} \ar[r,"p"'] & B
            \end{tikzcd}
            \end{equation*}
            where $s_1 \colon \tilde{B} \times_B C \to \tilde{B}$ is then a trivial extension.
            Since the diagram
            \begin{equation*}
            \begin{tikzcd}
            \tilde{B} \times_B A \ar[r,"u \circ p_2"] \ar[d,"p_1"'] & C \ar[d,"c"] \\
            \tilde{B} \ar[r,"p"'] & B
            \end{tikzcd}
            \end{equation*} commutes, there is a unique quandle homomorphism $\beta \colon \tilde{B} \times_B A \to \tilde{B} \times_B C$
            such that $u \circ p_2 = s_2 \circ \beta$ and $s_1 \circ \beta = p_1$. By universality of the factorization $l \circ t$, there
            exists a unique quandle homomorphism $\gamma \colon (\tilde{B} \times_B A)/\cap \to \tilde{B} \times_B C$ such that
            $\beta = \gamma \circ t$ and $l = s_1 \circ \gamma$. Thus $s_2 \circ \gamma \circ t = s_2 \circ \beta = u \circ
            p_2$, and
            since the square~\eqref{squarepushout} is a pushout, this yields a unique quandle homomorphism $\alpha \colon A/p_2(\cap) \to C$
             such that $\alpha \circ h = s_2 \circ \gamma$ and $\alpha \circ q = u $. The latter equality implies that
            \[c \circ \alpha \circ q = c \circ u = f = c_f \circ q, \]
            and since $q$ is a surjective homomorphism, we have $c_f = c \circ \alpha$.
            \end{proof}

Let us give now a concrete, algebraic description of the
congruence needed to produce the centralization of $f$, that is, a
description of $p_2(\cap)$. We define explicitly a congruence
$R_c$ such that $R_c \subseteq \Eq(f)$ and show directly that the
induced extension $c$ in
\[
 \begin{tikzcd}
 A \ar[rr,"f"] \ar[rd,"q_{R_c}"'] & & B\\
 & A/R_c \ar[ur,dotted,"c"'] &
 \end{tikzcd}
\]
is the reflection of $f$ in $\CExt(B)$, so that $p_2(\cap) = R_c$.
Thus, this also offers another approach to centralization. We
define a relation $R$ on $A$ by putting
    \[
    R=\{(z \lhd x, z \lhd x')  \,|\, z,x,x' \in A \text{ and }f(x)=f(x') \} \subseteq A \times A.
    \]
    This relation is reflexive and symmetric but not transitive. Moreover, it is not stable under the quandle operations, so it is not
    a congruence in general. We construct $R_c$ as the congruence generated by $R$ and we denote the corresponding quotient by $q_{R_c} \colon A \to A/R_c$.

    The congruence $R_c$ is included in the kernel congruence $\Eq(f)$ of $f$. Indeed, every element of $R$ is included in the kernel congruence
    of $f$: if $(z \lhd x, z \lhd x') \in R$, then \[f(z \lhd x)=f(z) \lhd f(x) = f(z) \lhd f(x')=f(z \lhd x').\]

    The universal property of the quotient induces a unique quandle homomorphism $c \colon A/R_c \to B$, which is actually a central extension:
    if $c([a]_{R_c}) = c([a']_{R_c})$ then in particular $f(a)=f(a')$; this implies that $(z \lhd a) R_c (z \lhd a')$ and consequently
    $[z \lhd a]_{R_c}=[z \lhd a]_{R_c}$.

    Now let us see that it is universal: given another factorization $f = c' \circ q$, with $c' \colon A' \to B$ a central extension, we have
    to verify the existence of a unique $\phi \colon A/R_c \to A'$ such that $c' \circ \phi = c$ and $\phi \circ q_{R_c} = p$. Let
    $(z \lhd a, z \lhd a') \in R$, then $f(a)=f(a')$ or equivalently $c' \circ q(a)= c' \circ q(a')$. The homomorphism $c'$ being a central
    extension, we have that $w \lhd q(a) = w \lhd q(a')$ for all $w \in A'$. Thus by taking $w=q(z)$ we get $q(z \lhd a) = q(z \lhd a')$, and by
    the universal property of the quotient $q_{R_c}$ we get the desired factorization.


\section{Normalizing an extension}\label{Normalizing an extension}


Now we focus our attention to the case of normal extensions. We
are going to prove the following theorem.

\begin{theorem}\label{TheoNormalization}
The category $\NExt(\Qnd)$ of normal extensions is a (regular
epi)-reflective subcategory of the category $\Ext(\Qnd)$.
\end{theorem}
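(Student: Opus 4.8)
The plan is to prove Theorem \ref{TheoNormalization} by the same two-step strategy that worked for central extensions: first reduce the global statement about $\NExt(\Qnd) \hookrightarrow \Ext(\Qnd)$ to a fibrewise statement about each $\NExt(B) \hookrightarrow \Ext(B)$, and then construct, for a fixed extension $f \colon A \to B$, a congruence $R_n \subseteq \Eq(f)$ whose quotient yields the universal normal extension $n_f \colon A/R_n \to B$. The reduction to the fibres is legitimate because normal extensions are pullback-stable (this was recalled just after the inclusions $\TExt(B) \subseteq \NExt(B) \subseteq \CExt(B) \subseteq \Ext(B)$, citing \cite[Proposition 4.3]{JK94}): if each fibre has a reflection and these are compatible with reboxing along morphisms of $B$, the reflections glue to a global left adjoint. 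So the heart of the matter is to normalize a single extension $f \colon A \to B$.

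For the fibrewise construction I would imitate the algebraic description of the centralization. There we took the relation $R = \{(z \lhd x, z \lhd x') \mid f(x) = f(x')\}$ and let $R_c$ be the congruence it generates; the point was that centrality of $c'$ forces any competing factorization to collapse $R$. The defining condition $(N)$ of Proposition \ref{normal} suggests the analogous generating relation for normalization: I would put
\[
R = \{ (a_0, \, a_0 \lhd^{\alpha_1} a'_1 \lhd^{\alpha_2} \cdots \lhd^{\alpha_n} a'_n) \},
\]
ranging over all data $a_i, a'_i \in A$ with $f(a_i) = f(a'_i)$ and all $\alpha_j \in \{-1,1\}$ for which the parallel word built from the unprimed entries satisfies $a_0 \lhd^{\alpha_1} a_1 \lhd^{\alpha_2} \cdots \lhd^{\alpha_n} a_n = a_0$. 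Every such pair lies in $\Eq(f)$ because $f$ preserves the operations and $f(a_i) = f(a'_i)$, so both sides have image $f(a_0)$; hence the generated congruence $R_n$ is contained in $\Eq(f)$ and $f$ factors as $n_f \circ q_{R_n}$ with $q_{R_n} \colon A \to A/R_n$. I would then verify that $n_f \colon A/R_n \to B$ actually satisfies Condition $(N)$ and is therefore normal: given a word in $A/R_n$ returning to its start, lift it to $A$, use that the chosen generators of $R_n$ precisely record the failures of $(N)$, and conclude that the corresponding primed word is forced back to its start in the quotient.

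The universality argument should then parallel the central case almost verbatim. Given any factorization $f = n' \circ u$ with $n' \colon A' \to B$ a normal extension, I must produce a unique $\phi \colon A/R_n \to A'$ with $n' \circ \phi = n_f$ and $\phi \circ q_{R_n} = u$. It suffices to show that $u \colon A \to A'$ collapses every generator of $R$: if $a_0 \lhd^{\alpha_1} a_1 \lhd \cdots \lhd^{\alpha_n} a_n = a_0$ and $f(a_i) = f(a'_i)$, then $u(a_i)$ and $u(a'_i)$ have equal image under $n'$, so normality of $n'$ (applied via Proposition \ref{normal} inside $A'$) gives $u(a_0) = u(a_0) \lhd^{\alpha_1} u(a'_1) \lhd \cdots \lhd^{\alpha_n} u(a'_n) = u(a_0 \lhd^{\alpha_1} a'_1 \lhd \cdots \lhd^{\alpha_n} a'_n)$; thus $u$ factors through $q_{R_n}$, and the universal property of the quotient supplies the unique $\phi$.

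I expect the main obstacle to be checking that the quotient $n_f$ genuinely lands in $\NExt(B)$, i.e.\ that passing to the congruence generated by $R$ does not lose the normality one built into the generators. Because $R$ itself is neither transitive nor a subquandle, $R_n$ is obtained by closing under reflexivity, transitivity, symmetry, and the quandle operations, and an identity word $a_0 \lhd^{\alpha_1} [a_1] \lhd \cdots \lhd^{\alpha_n} [a_n] = [a_0]$ holding in $A/R_n$ need not lift to an identity word in $A$ — only to a word whose two ends are $R_n$-related. The delicate step is therefore to show that this $R_n$-relation can be resolved back into a finite chain of the generating moves, so that the required conclusion again holds modulo $R_n$; this is exactly where one must exploit that the generators of $R$ are closed under the relevant substitutions and, if needed, a permutability property of $\Eq(f)$ with $\sim_{\Inn(A)}$ analogous to the one furnished by Lemma \ref{341}. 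If a direct elementary verification proves intractable, I would fall back on the Freyd Adjoint Functor Theorem, exactly as the authors flagged in the second of their ``two main problems'', obtaining existence of the reflection non-constructively.
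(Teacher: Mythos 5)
Your primary, constructive route has a genuine gap, and it sits exactly where you yourself flag ``the main obstacle'': showing that the quotient $n_f \colon A/R_n \to B$ is itself a normal extension. The universality half of your argument is fine --- normality of $n'$ does force any competing factorization $u$ to collapse your generators, so $u$ factors through $q_{R_n}$ --- but that only shows $A/R_n$ is the smallest candidate quotient, not that it lies in $\NExt(B)$. Condition $(N)$ is quantified over identity words \emph{in the quotient}, and, as you note, such a word lifts only to a word in $A$ whose endpoints are $R_n$-related; resolving that back into a chain of generating moves is precisely the step you do not supply. This is not a routine verification you can defer: the authors state explicitly that they were unable to carry out the fibrewise/congruence construction for normal extensions, and they close the paper by declaring that a concrete description of the congruence $R_n$ is an open problem. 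So the heart of your proposal is an unproved (and, as far as the paper knows, unknown) claim.

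Your fallback --- invoke the Freyd Adjoint Functor Theorem --- is in fact the paper's actual proof, but you only name it without executing any of its obligations. The paper's argument consists of: (i) $\Ext(\Qnd)$ is small-complete (products as in $\Qnd^{\rightarrow}$, equalizers via the regular epi--mono factorization); (ii) $\NExt(\Qnd)$ is closed under subobjects and small products in $\Ext(\Qnd)$, hence closed under small limits, so the inclusion $H_1$ preserves them; (iii) the Solution Set Condition holds, using the set of canonical quotients $A \to A/R$ and the (regular epi)-mono factorization of the comparison map into a pullback of the given normal extension; and (iv) the unit components are regular epimorphisms because the reflection can be chosen among the canonical quotients. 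None of these verifications appears in your proposal, so neither branch of your argument constitutes a proof. Also note a smaller point: your generating relation fixes the base point $a_0$ rather than letting it range over $f^{-1}(f(a_0))$ as condition $(N)$ requires, which weakens the generators; and your fibrewise reduction would still need a naturality check to glue the reflections $\NExt(B) \hookrightarrow \Ext(B)$ into a left adjoint of $\NExt(\Qnd) \hookrightarrow \Ext(\Qnd)$, whereas the paper works globally from the start.
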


For this, we first prove that the inclusion functor $$H_1 \colon
\NExt(\Qnd) \hookrightarrow \Ext(\Qnd)$$ has a left adjoint using
the Freyd Adjoint Functor Theorem (see, for example, Theorem $9.9$
in \cite{BarrWells}):

\begin{theorem}  Given a small-complete category $\catA$ with small hom-sets, a functor $G\colon \catA\to \catX$ has a left adjoint
if and only if it preserves all small limits and satisfies the
so-called \emph{Solution Set Condition}: for each object $X \in
\catX$ there is a set $\mathcal{S}_X$ of objects of the comma
category $(X \downarrow G)$ such that for every object $Y$ of $(X
\downarrow G)$ there is a morphism $S \to Y$ in $(X \downarrow G)$
with $S$ in $\mathcal{S}_X$.
\end{theorem}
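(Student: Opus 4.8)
The plan is to prove both implications, the bulk of the work lying in the sufficiency direction. The organising reformulation is that $G$ has a left adjoint precisely when, for every object $X\in\catX$, there is a universal arrow from $X$ to $G$, i.e.\ an initial object of the comma category $(X\downarrow G)$; from such initial objects $(F(X),\eta_X)$ one assembles $F$ into a functor and recovers the adjunction in the standard way, so the theorem reduces to producing an initial object in each $(X\downarrow G)$. The necessity direction is immediate: if $G$ has a left adjoint then it is a right adjoint and hence preserves all small limits, and for the Solution Set Condition one may take $\mathcal{S}_X=\{(F(X),\eta_X)\}$, a singleton, since the unit $\eta_X\colon X\to GF(X)$ is already an initial object of $(X\downarrow G)$.

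For sufficiency, fix $X$ and work inside $\catD:=(X\downarrow G)$. First I would check that $\catD$ is small-complete with small hom-sets: given a small diagram $j\mapsto(A_j,f_j)$ in $\catD$, form $A=\lim_j A_j$ in $\catA$; since $G$ preserves this limit, $GA=\lim_j GA_j$, so the cone $(f_j)$ induces a unique $f\colon X\to GA$, and $(A,f)$ is the limit in $\catD$. Small hom-sets for $\catD$ are inherited from $\catA$. Next I would use the Solution Set Condition to replace the weakly initial set $\mathcal{S}_X$ by a single weakly initial object: since $\catD$ has small products, set $W=\prod_{S\in\mathcal{S}_X}S$ in $\catD$; for any object $Y$ there is some $S\in\mathcal{S}_X$ admitting a morphism $S\to Y$, and precomposing with the projection $W\to S$ yields a morphism $W\to Y$, so $W$ is weakly initial in $\catD$.

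The crux is then the general lemma that a small-complete category with small hom-sets possessing a weakly initial object has an initial object, which I would prove and apply to $\catD$ and $W$. Here I would take the wide equalizer $e\colon I\to W$ of the whole set $\operatorname{End}(W)$ of endomorphisms of $W$; this is where small hom-sets are essential, guaranteeing that $\operatorname{End}(W)$ is a set so that the wide equalizer exists by small-completeness. By construction $u\circ e=e$ for every $u\in\operatorname{End}(W)$, and $e$ is a monomorphism. The object $I$ is weakly initial, by composing $I\xrightarrow{e}W$ with any $W\to Y$. To upgrade this to initiality I must show any two maps $f,g\colon I\to Y$ coincide: forming their equalizer $k\colon K\to I$ and choosing, by weak initiality of $W$, a map $h\colon W\to K$, the composite $e\circ k\circ h$ is an endomorphism of $W$, whence $e\circ(k\circ h\circ e)=e=e\circ\id_I$; since $e$ is monic this forces $k\circ(h\circ e)=\id_I$, so $k$ is a split epimorphism as well as a monic equalizer, hence an isomorphism, which gives $f=g$. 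Applying the lemma to $\catD$ then yields the required initial object $(F(X),\eta_X)$.

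I expect the main obstacle to be exactly this last lemma, and specifically the verification that the wide equalizer of \emph{all} endomorphisms of $W$ is genuinely initial rather than merely weakly initial; the monicity of $e$ together with the split-epi-plus-mono argument is the delicate point. Everything else, namely the completeness of the comma category, the product trick turning the solution set into one weakly initial object, and the reduction to universal arrows, is routine once the hypotheses are unwound, and it is precisely the small hom-set and small-completeness assumptions that license the wide-equalizer construction.
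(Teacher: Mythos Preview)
Your proof is correct and is the standard argument for the Freyd Adjoint Functor Theorem, but there is nothing to compare it against: the paper does not prove this theorem. It merely quotes the statement with a reference to Barr--Wells (Theorem~9.9) and then \emph{applies} it, verifying its hypotheses for the inclusion $H_1\colon \NExt(\Qnd)\hookrightarrow\Ext(\Qnd)$ through a sequence of lemmas (small-completeness of $\Ext(\Qnd)$, closure of $\NExt(\Qnd)$ under subobjects, preservation of small limits by $H_1$, and the Solution Set Condition). In other words, the paper treats this theorem as a black box from the literature.

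That said, your argument is sound in every detail. The reduction to finding an initial object in each comma category $(X\downarrow G)$, the transfer of small limits to $(X\downarrow G)$ via limit-preservation of $G$, the product trick collapsing the solution set to a single weakly initial object, and the wide-equalizer-of-endomorphisms construction are exactly the classical steps. The delicate point you flag---that the equalizer $k$ becomes a split epimorphism via $k\circ(h\circ e)=\id_I$ and hence an isomorphism---is handled correctly; the small hom-set hypothesis is indeed used precisely where you say, to ensure $\operatorname{End}(W)$ is a set so that its wide equalizer exists.
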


Recall that an object of $(X \downarrow G)$ is an arrow in $\catX$
of the form $f \colon X \to G(A)$ for some $A$, and a morphism in
$(X \downarrow G)$ from $f \colon X \to G(A)$ to $f' \colon X \to
G(A')$ is an arrow $a\colon A \to A'$ in $\catA$ such that $f'
=G(a) \circ f$.

We split the proof into several lemmas.

\begin{lemma} The category $\Ext(\Qnd)$ is small-complete.
\end{lemma}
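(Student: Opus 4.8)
The plan is to show that $\Ext(\Qnd)$, the full subcategory of $\Qnd^\rightarrow$ whose objects are surjective homomorphisms, has all small limits. I would do this by first recalling that the arrow category $\Qnd^\rightarrow$ is itself small-complete, since $\Qnd$ is a variety and hence small-complete, and limits in a diagram category (or arrow category) are computed pointwise. Thus for any small diagram $D \colon \catA \to \Ext(\Qnd) \hookrightarrow \Qnd^\rightarrow$, a limit $\lim D$ exists in $\Qnd^\rightarrow$, and it is given by an arrow $\ell \colon L_1 \to L_0$ where $L_1 = \lim (\mathrm{dom} \circ D)$ and $L_0 = \lim (\mathrm{cod} \circ D)$ are the limits in $\Qnd$ of the domain and codomain diagrams, with $\ell$ the induced map between them.

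The only thing left to verify is that this limit, computed in $\Qnd^\rightarrow$, actually lands in the full subcategory $\Ext(\Qnd)$; equivalently, that $\ell$ is again a surjective homomorphism. The key step is therefore the following: a limit (in $\Qnd$) of a diagram of surjective homomorphisms that are the legs of a diagram in $\Ext(\Qnd)$ is again surjective. First I would reduce, as usual, to the cases of products and of equalizers, since every small limit is built from these. For products the argument is immediate: a set-indexed product of surjections is a surjection. For equalizers, given a parallel pair in $\Ext(\Qnd)$, I would describe the equalizer objects $L_1$ and $L_0$ explicitly as subquandles of the relevant products (the ``compatible families'' satisfying the equalizer condition) and check that the restriction of $\ell$ remains onto.

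The hard part will be precisely this surjectivity check for general limits. Surjections (regular epimorphisms) need not be closed under arbitrary limits in an arrow category: while products of surjections are surjections, equalizers of surjections need not be surjective in an arbitrary category. The point that makes it work here is that the maps in the diagram are morphisms of $\Ext(\Qnd)$, so the squares commute compatibly and one can lift elements coherently across the whole diagram. Concretely, given a compatible family $(b_i)$ in $L_0 \subseteq \prod_i B_i$, I would need to produce a compatible family $(a_i)$ in $L_1 \subseteq \prod_i A_i$ with $\ell(a_i) = b_i$; each individual $f_i \colon A_i \to B_i$ being surjective lets me choose preimages, but the compatibility conditions imposed by the morphisms of the diagram must be respected. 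Here I expect to use that the connecting morphisms $\alpha_1$ in $\Qnd^\rightarrow$ satisfy $\alpha_0 \circ f = g \circ \alpha_1$, so that a compatible choice can be made along the diagram. Once surjectivity of $\ell$ is established, $\lim D$ computed in $\Qnd^\rightarrow$ is an object of $\Ext(\Qnd)$, and since $\Ext(\Qnd)$ is a full subcategory, it is also the limit there; this proves small-completeness.
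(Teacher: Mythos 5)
Your reduction to products and equalizers is fine, and the product case is correct (a set-indexed product of surjections is a surjection). The genuine gap is in the equalizer step: you assert that the limit computed pointwise in $\Qnd^\rightarrow$ already lands in $\Ext(\Qnd)$, i.e.\ that the induced map $\ell\colon E_1 \to E_0$ between the equalizers of the domain and codomain diagrams is still surjective, and that the commutativity of the squares lets you ``lift elements coherently''. This is false, and no coherent-lifting argument can repair it. Take $A=\{a_1,a_2\}$ and $C=\{c_1,c_2\}$ to be two-element trivial quandles, $B=D=\{\ast\}$, $f\colon A\to B$ and $g\colon C\to D$ the unique (surjective) maps, $\alpha_1$ the identity and $\beta_1$ the swap $a_1\mapsto c_2$, $a_2\mapsto c_1$ (with $\alpha_0=\beta_0$ the unique map $B\to D$). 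Both squares commute, so $(\alpha_1,\alpha_0)$ and $(\beta_1,\beta_0)$ are a parallel pair in $\Ext(\Qnd)$, yet $E_1=\{a\in A\mid \alpha_1(a)=\beta_1(a)\}=\emptyset$ while $E_0=B=\{\ast\}$, so $\ell$ is not surjective. The general obstruction is visible in your own sketch: given $b\in E_0$ and a preimage $a\in A$ of $b$ under $f$, the commuting squares only give $g(\alpha_1(a))=g(\beta_1(a))$, not $\alpha_1(a)=\beta_1(a)$.

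The paper avoids this by \emph{not} taking the equalizer of $\Qnd^\rightarrow$ as the equalizer of $\Ext(\Qnd)$. Instead, it forms the equalizer $(e_1,e_0)\colon e\to f$ in $\Qnd^\rightarrow$, takes the (regular epi)-mono factorization $e=m\circ p$ of the induced map $e\colon E_1\to E_0$, and declares the equalizer in $\Ext(\Qnd)$ to be $(e_1,\, e_0\circ m)\colon p\to f$, where $p\colon E_1\to \Im(e)$ is surjective by construction. The universal property still holds because any competing cone $h\to f$ in $\Ext(\Qnd)$ has $h$ surjective, and the orthogonality of regular epimorphisms against monomorphisms (recalled in Section~2 of the paper precisely for this purpose) forces the comparison map into $E_0$ to factor through $\Im(e)$. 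So the correct statement is not that $\Ext(\Qnd)$ is closed under limits of $\Qnd^\rightarrow$, but that it has its own (generally different) equalizers; your proof needs this modification to go through.
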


\begin{proof}
Let us first recall that all small limits exist if small products
and equalizers exist. First, all small products in $\Ext(\Qnd)$
exist and are computed as in $\Qnd^\rightarrow$, i.e.~ the product
of a family $(f_i\colon A_i \to B_i)_{i\in I}$ of surjective
quandle homomorphisms is given by $\prod_{i\in I} f_i \colon
\prod_{i\in I} A_i \to \prod_{i\in I} B_i$. Now, let
$\alpha=(\alpha_1,\alpha_0) \colon f \to g$ and
$\beta=(\beta_1,\beta_0) \colon f \to g$ be two parallel morphisms
in $\Ext(\Qnd)$. The equalizer of $(\alpha,\beta)$ is given by
$((e_1,e_0\circ m) \colon p \to f)$ where $((e_1,e_0)\colon e\to
f)$ is the equaliser of $(\alpha,\beta)$ in $\Qnd^\rightarrow$ and
$p$ and $m$ come from the regular epi-mono factorization $e=m\circ
p$:
\[
\begin{tikzcd}
E_1 \ar[r,"e_1", rightarrowtail] \ar[d,"p"] \ar[dd,bend
right,"e"'] & A \ar[r,shift left=1ex, "\alpha_1"]
\ar[r,shift right=1ex,"\beta_1"'] \ar[dd,"f"] & C \ar[dd,"g"]\\
I \ar[d,"m"] \ar[rd,"e_0\circ m", dotted, near start] & & \\
E_0 \ar[r,"e_0"'] & B \ar[r, "\alpha_0", shift left=1ex]
\ar[r,"\beta_0"', shift right=1ex] & D
\end{tikzcd}
\]
\end{proof}

\begin{lemma} $\NExt(\Qnd)$ is closed under subobjects in $\Ext(\Qnd)$.
\end{lemma}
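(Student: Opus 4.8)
The plan is to show that if $(f \colon A \to B)$ is a normal extension and $(g \colon C \to D)$ is a subobject of $f$ in $\Ext(\Qnd)$, then $g$ is itself a normal extension. First I would unpack what a subobject in $\Ext(\Qnd)$ means: we have a monomorphism $(\alpha_1, \alpha_0) \colon g \to f$ in $\Qnd^\rightarrow$, so that $\alpha_1 \colon C \to A$ and $\alpha_0 \colon D \to B$ are both injective quandle homomorphisms making the square commute. Since monomorphisms in $\Qnd$ are injective homomorphisms, I may treat $C$ as a subquandle of $A$ and $D$ as a subquandle of $B$, with $g$ the restriction of $f$.

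Next I would use the combinatorial characterization of normal extensions provided by Proposition~\ref{normal}, rather than the abstract definition via pullbacks, since it is better suited to a ``closed under subobjects'' argument. So I want to verify Condition $(N)$ for $g$. Suppose we are given elements $c_i \in C$ and signs $\alpha_j \in \{-1,1\}$ with
\[
c_0 \lhd^{\alpha_1} c_1 \lhd^{\alpha_2} \dots \lhd^{\alpha_n} c_n = c_0,
\]
and we take arbitrary $c'_i \in g^{-1}(g(c_i))$. The goal is to prove
\[
c'_0 \lhd^{\alpha_1} c'_1 \lhd^{\alpha_2} \dots \lhd^{\alpha_n} c'_n = c'_0.
\]
The idea is to transport this along the inclusion $\alpha_1 \colon C \hookrightarrow A$: viewing the $c_i$ and $c'_i$ as elements of $A$, the displayed relation $c_0 \lhd^{\alpha_1} \dots \lhd^{\alpha_n} c_n = c_0$ still holds in $A$ because $\alpha_1$ is a homomorphism (equivalently, it is the same equation read in the subquandle). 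Then since $f$ is a normal extension, Condition $(N)$ for $f$ applies, \emph{provided} each $c'_i$ lies in $f^{-1}(f(c_i))$. This is exactly where the compatibility of the square is used: from $\alpha_0 \circ g = f \circ \alpha_1$ and the injectivity of $\alpha_0$, one checks that $g(c'_i) = g(c_i)$ forces $f(c'_i) = f(c_i)$, so indeed $c'_i \in f^{-1}(f(c_i))$. Condition $(N)$ for $f$ then yields $c'_0 \lhd^{\alpha_1} \dots \lhd^{\alpha_n} c'_n = c'_0$ as an equation in $A$, and since this is an equation among elements of the subquandle $C$, it holds in $C$ as well. Hence $g$ satisfies $(N)$ and is normal.

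The step I expect to require the most care is the verification that $g^{-1}(g(c_i)) \subseteq f^{-1}(f(c_i))$ under the identification, i.e. the bookkeeping that the morphism $(\alpha_1,\alpha_0)$ being monic in $\Qnd^\rightarrow$ really gives injective components fitting together as needed; here I would explicitly invoke that a monomorphism in $\Qnd^\rightarrow$ has componentwise monic legs and that monos in $\Qnd$ are injective homomorphisms, as recalled earlier in the excerpt. The rest is a direct application of Proposition~\ref{normal}, so the argument is essentially the observation that Condition $(N)$ is a universally quantified implication between equations that is manifestly inherited by subquandles once the fibre condition is matched up.
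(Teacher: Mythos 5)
Your proof is correct and follows essentially the same route as the paper: identify what a monomorphism in $\Ext(\Qnd)$ gives you, then transport Condition $(N)$ of Proposition~\ref{normal} along the injective top component $\alpha_1$. One small caveat: since $\Ext(\Qnd)$ is only a \emph{full subcategory} of $\Qnd^\rightarrow$, a monomorphism there is characterized by injectivity of $\alpha_1$ alone (the paper proves exactly this), and $\alpha_0$ need not be injective --- fortunately your argument never genuinely uses injectivity of $\alpha_0$, because the implication you need, namely $g(c_i')=g(c_i)\Rightarrow f(\alpha_1(c_i'))=f(\alpha_1(c_i))$, already follows from commutativity of the square.
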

\begin{proof}
A morphism $(\alpha_1, \alpha_0)$ in $\Ext(\Qnd)$
\[
\begin{tikzcd}
A \ar[r,"\alpha_1"] \ar[d,"f"'] & C \ar[d,"g"] \\
B \ar[r,"\alpha_0"'] & D
\end{tikzcd}
\]
is a monomorphism if and only if $\alpha_1$  is an injective
homomorphism. Indeed, suppose that $\alpha_1$ is injective and
consider the diagram:
\[\begin{tikzcd}
Z \ar[r,shift left, "m_1"] \ar[r,shift right, "m_2"'] \ar[d,"h"'] & A \ar[r,"\alpha_1"] \ar[d,"f"'] & C \ar[d,"g"]\\
X \ar[r,shift left, "n_1"] \ar[r,shift right, "n_2"'] & B
\ar[r,"\alpha_0"] & D
\end{tikzcd}\]
where $\alpha_1 \circ m_1 = \alpha_1 \circ  m_2$ and $\alpha_0
\circ n_1 = \alpha_0 \circ n_2$.  Since  $\alpha_1$ is injective,
we get that $m_1  = m_2$ and from the surjectivity of $h$ we
obtain that $n_1 = n_2$. Conversely, suppose that $(\alpha_1,
\alpha_0)$ is a monomorphism in $\Ext(\Qnd)$ and consider two
quandle  homomorphisms $m_1, m_2 \colon Z \to A$ such that
$\alpha_1 \circ m_1 = \alpha_1 \circ  m_2$. Then we can consider
the following commutative diagram:
\[\begin{tikzcd}
Z \ar[r,shift left, "m_1"] \ar[r,shift right, "m_2"'] \ar[d,equal] & A \ar[r,"\alpha_1"] \ar[d,"f"'] & C \ar[d,"g"]\\
X \ar[r,shift left, "f\circ m_1"] \ar[r,shift right, "f\circ
m_2"'] & B \ar[r,"\alpha_0"] & D
\end{tikzcd}\]
Being $(\alpha_1, \alpha_0)$ a monomorphism in $\Ext(\Qnd)$, we
get that $m_1 = m_2$.

We want to prove that if $g$ is a normal extension and $\alpha_1$
an injective homomorphism, $f$ is a normal extension, too.

Let $a_i, a_i'$ be elements of $A$ and $\alpha_i$ be elements of
$\mathbb{Z}$ for $0\leq i \leq n$ such that
\[
a_0 \lhd^{\alpha_1} a_1 \lhd^{\alpha_2} \dots \lhd^{\alpha_n} a_n
= a_0
\]
and  $f(a_i)=f(a_i')$. We must show that
\[
a'_0 \lhd^{\alpha_1} a'_1 \lhd^{\alpha_2} \dots \lhd^{\alpha_n}
a'_n = a'_0.
\]
For this, it suffices to show that
\[
\alpha_1(a'_0) \lhd^{\alpha_1} \alpha_1(a'_1) \lhd^{\alpha_2}
\dots \lhd^{\alpha_n} \alpha_1(a'_n) = \alpha_1(a'_0)
\]
since $\alpha_1$ is injective. But we know that
\[
\alpha_1(a_0) \lhd^{\alpha_1} \alpha_1(a_1) \lhd^{\alpha_2} \dots
\lhd^{\alpha_n} \alpha_1(a_n) = \alpha_1(a_0)
\]
and
$g(\alpha_1(a_i))=\alpha_0(f(a_i))=\alpha_0(f(a'_i))=g(\alpha_1(a'_i))$.
The result follows by normality of $g$.
\end{proof}

\begin{lemma} $\NExt(\Qnd)$ is small-complete and $H_1$ preserves small limits.
\end{lemma}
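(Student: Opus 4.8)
The plan is to use the standard categorical fact that a full subcategory of a complete category which is closed under small products and under subobjects is itself complete, with all limits computed as in the ambient category, so that the inclusion preserves them. Since we have already established that $\Ext(\Qnd)$ is small-complete and that $\NExt(\Qnd)$ is closed under subobjects (the previous lemma), it remains only to verify closure under small products and to observe that equalizers taken in $\Ext(\Qnd)$ between normal extensions stay normal.

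First I would check closure under small products. Let $(f_i \colon A_i \to B_i)_{i \in I}$ be a family of normal extensions; their product in $\Ext(\Qnd)$ is $\prod_{i\in I} f_i \colon \prod_{i\in I} A_i \to \prod_{i\in I} B_i$, where the quandle operations on $\prod_{i\in I} A_i$ are computed componentwise. I would verify Condition $(N)$ of Proposition~\ref{normal} for $\prod_{i\in I} f_i$. Given elements of $\prod_{i\in I} A_i$ and exponents $\alpha_j \in \{-1,1\}$ with
\[
a_0 \lhd^{\alpha_1} a_1 \lhd^{\alpha_2} \dots \lhd^{\alpha_n} a_n = a_0
\]
in the product, this identity holds in each component $A_i$; moreover any $a'_j$ with $(\prod_{i\in I} f_i)(a'_j) = (\prod_{i\in I} f_i)(a_j)$ satisfies $(a'_j)_i \in f_i^{-1}(f_i((a_j)_i))$ for every $i$. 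Applying Condition $(N)$ in each normal extension $f_i$ yields the analogous identity for the $a'_j$ in every component, hence in the product. Thus $\prod_{i\in I} f_i$ is a normal extension.

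Next I would treat equalizers. From the proof that $\Ext(\Qnd)$ is small-complete, the equalizer of a parallel pair $\alpha, \beta \colon f \rightrightarrows g$ in $\Ext(\Qnd)$ comes equipped with a morphism to $f$ whose first component is an injective homomorphism; hence this equalizer is a subobject of $f$ in $\Ext(\Qnd)$. When $f$ is a normal extension, closure of $\NExt(\Qnd)$ under subobjects shows that the equalizer is again normal. Since all small limits are built from small products and equalizers, these two facts together show that $\NExt(\Qnd)$ has all small limits, computed exactly as in $\Ext(\Qnd)$, and therefore that the inclusion $H_1$ preserves them.

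The only part requiring genuine verification is the closure under products, and even there the argument is a routine reduction of Condition $(N)$ to the componentwise statement; the equalizer case is immediate from the earlier lemmas. Consequently the main (and mild) obstacle is simply confirming that the componentwise behaviour of the product quandle is compatible with $(N)$, after which both small-completeness of $\NExt(\Qnd)$ and preservation of limits by $H_1$ follow automatically.
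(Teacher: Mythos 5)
Your proof is correct and follows exactly the paper's route: the paper likewise deduces the lemma from closure of $\NExt(\Qnd)$ under small products (checked componentwise via Condition $(N)$) and closure under equalizers via the subobject lemma, merely stating these steps without the details you supply.
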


\begin{proof}
It is easy to show that the subcategory $\NExt(\Qnd)$ is closed
under small products computed in $\Ext(\Qnd)$. The fact that it is
also closed under equalizers follows from the fact that it is
closed under subobjects. Consequently, $\NExt(\Qnd)$ is closed
under all small-limits in $\Ext(\Qnd)$ and the result follows.
\end{proof}

\begin{lemma} The inclusion functor $H_1$ satisfies the Solution Set
Condition.
\end{lemma}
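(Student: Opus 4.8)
The statement to prove is that the inclusion $H_1 \colon \NExt(\Qnd) \hookrightarrow \Ext(\Qnd)$ satisfies the Solution Set Condition. By the previous lemmas we already know $\NExt(\Qnd)$ is small-complete and $H_1$ preserves small limits, so the Solution Set Condition is the last ingredient needed for the Freyd Adjoint Functor Theorem.

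Let me think about what the Solution Set Condition requires here.

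The ambient categories are $\catA = \NExt(\Qnd)$ and $\catX = \Ext(\Qnd)$, with $G = H_1$ the inclusion. For each object $X$ of $\Ext(\Qnd)$—that is, each surjective quandle homomorphism $f \colon A \to B$—I need a *set* $\mathcal{S}_f$ of objects of the comma category $(f \downarrow H_1)$ such that every object of $(f \downarrow H_1)$ receives a morphism from some member of $\mathcal{S}_f$.

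Let me unwind what an object of $(f \downarrow H_1)$ is. It's a morphism in $\Ext(\Qnd)$ from $f$ to $H_1(g) = g$ where $g \colon C \to D$ is a normal extension. So it's a commutative square
$$\begin{tikzcd} A \ar[r,"\alpha_1"] \ar[d,"f"'] & C \ar[d,"g"] \\ B \ar[r,"\alpha_0"'] & D \end{tikzcd}$$
with $g$ normal. A morphism in $(f \downarrow H_1)$ from such a square to another one (over a normal extension $g' \colon C' \to D'$) is a morphism of normal extensions, i.e. a pair $(\beta_1, \beta_0)$ with $g' \beta_1 = \beta_0 g$, compatible with the data from $f$.

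**The key bounding idea.**
The natural way to produce a solution set is a cardinality bound. The point is: given an arbitrary object of $(f \downarrow H_1)$, namely a square over a normal extension $g \colon C \to D$ as above, I can replace $(C, D, g)$ by the image of $A$ (resp. $B$) inside it, which is a *subobject* of bounded size, and which is again a normal extension by the closure-under-subobjects lemma proved above.

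Here is the plan in detail. First, I would factor the square through images. Let $C_0 = \Im(\alpha_1) \subseteq C$ be the subquandle generated by $\alpha_1(A)$, and let $D_0 = \Im(\alpha_0) \subseteq D$; since $g \alpha_1 = \alpha_0 f$ and $f$ is surjective, $g$ restricts to a surjection $g_0 \colon C_0 \to D_0$. Concretely $D_0 = g(C_0) = \alpha_0(B)$. Then $g_0 \colon C_0 \to D_0$ is a subobject of $g$ in $\Ext(\Qnd)$ in the sense of the monomorphism description given in the lemma above (the comparison $C_0 \hookrightarrow C$ is injective), so by the lemma asserting that $\NExt(\Qnd)$ is closed under subobjects, $g_0$ is again a normal extension. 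Crucially, $\alpha_1$ and $\alpha_0$ corestrict to surjections $A \twoheadrightarrow C_0$ and $B \twoheadrightarrow D_0$, and this corestriction is a morphism in $(f \downarrow H_1)$ from the object $(g_0, \text{corestrictions})$ to the original object $(g, \alpha_1, \alpha_0)$. Therefore it suffices to collect a solution set among those objects $(g \colon C \to D)$ for which $A \to C$ and $B \to D$ are surjective.

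**Closing by cardinality.**
Now I would bound the size. If $A \twoheadrightarrow C$ is a surjection of quandles, then the cardinality of $C$ is at most that of $A$; likewise $|D| \le |B|$. So up to isomorphism there is only a *set* of possible underlying sets (any quotient of $A$, any quotient of $B$), only a set of quandle operations on each, and only a set of surjective homomorphisms $A \to C$, $B \to D$ realizing the corestriction data. Concretely, I would let $\mathcal{S}_f$ be a set of representatives, one for each isomorphism class, of objects of $(f \downarrow H_1)$ of the form
$$\begin{tikzcd} A \ar[r,"q_1", two heads] \ar[d,"f"'] & C \ar[d,"g"] \\ B \ar[r,"q_0"', two heads] & D \end{tikzcd}$$
where $g$ is a normal extension, $q_1, q_0$ are surjective, $C$ is a quotient quandle of $A$ and $D$ a quotient quandle of $B$. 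Since each such $C$ (resp.~$D$) arises as $A/R$ (resp.~$B/S$) for a congruence, and there is only a set of congruences on $A$ and on $B$, the collection $\mathcal{S}_f$ is indeed a set. The factorization-through-images argument of the previous paragraph shows every object of $(f \downarrow H_1)$ admits a morphism from a member of $\mathcal{S}_f$, which is exactly the Solution Set Condition.

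The step I expect to be the main obstacle is verifying cleanly that the image factorization genuinely lives in $(f \downarrow H_1)$ as a morphism of normal extensions—that is, checking that $g_0 \colon C_0 \to D_0$ really is a normal extension and that the corestriction square commutes with the original data so as to constitute a morphism in the comma category. The normality of $g_0$ is precisely where I invoke the closure-under-subobjects lemma proved just above, via the monomorphism $C_0 \hookrightarrow C$ being an injective homomorphism; I must make sure the subobject $g_0$ is formed so that this lemma applies, i.e.~that $(C_0 \hookrightarrow C,\, D_0 \hookrightarrow D)$ is a monomorphism in $\Ext(\Qnd)$ with $D_0 = g(C_0)$.
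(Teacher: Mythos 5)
Your proof is correct, and it shares the paper's overall strategy --- produce a cofinal \emph{set} of objects of $(f \downarrow H_1)$ by a cardinality bound via congruences, using the closure of $\NExt(\Qnd)$ under subobjects in $\Ext(\Qnd)$ --- but the reduction step is genuinely different. You shrink an arbitrary object $(\alpha_1,\alpha_0)\colon f \to H_1(g)$ by taking images on both levels, so your solution set is indexed by pairs of congruences (one on $A$, one on $B$). The paper instead first pulls $g\colon C \to D$ back along $\alpha_0$ to get a normal extension $g'$ over $B$ itself (this uses the pullback-stability of normal extensions recalled in Section 3), and then takes the regular epi--mono factorization of the comparison map $\langle f,\alpha_1\rangle\colon A \to B\times_D C$; its solution set consists of objects of the special form $(\pi,1_B)$ with $\pi$ a canonical quotient of $A$, indexed by congruences on $A$ alone. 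Your route avoids the pullback and the appeal to pullback-stability, at the price of a slightly larger (still small) solution set; the paper's normalization of the codomain to $B$ pays off afterwards, when the shape $(\pi,1_B)$ of the reflection is used to show that the unit of the adjunction is a regular epimorphism. Two points worth making fully explicit in your write-up: the equality $D_0=\alpha_0(B)=g(C_0)$ uses the surjectivity of $f$ (this is exactly what makes $g_0$ a surjection onto $D_0$, so that the closure-under-subobjects lemma, which is stated for extensions, applies to the monomorphism $(C_0\hookrightarrow C,\,D_0\hookrightarrow D)$ of $\Ext(\Qnd)$); and the passage from the image object to a canonical representative $A/R \to B/S$ is an isomorphism in the comma category, so composing it with that inclusion yields the required arrow from a member of $\mathcal{S}_f$ to the given object.
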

\begin{proof}
For any quandle $A$, we define
\[
Q_A=\left\{\text{canonical projection } \pi \colon A \to A/R \, \,
|\,  \,\text{$R$ congruence on $A$} \right\}
\]
to be the set of canonical representatives of quotients of $A$
(there is only a set of congruences on $A$). As in every variety,
any surjective quandle homomorphism $q\colon A \to Q$ is
isomorphic to an element $\pi_q \colon A \to A/\Eq(q)$ of $Q_A$,
i.e.~$q=i_q \circ \pi_q$ for a (unique) isomorphism $i_q$:
\[
\begin{tikzcd}
A \ar[r,"q"] \ar[d,"\pi_q"'] & Q \ar[d,equal] \\
A/\Eq(q) \ar[r,"i_q"'] & \Im(q)
\end{tikzcd}
\]
Let $f \colon A\to B$ be an object of $\Ext(\Qnd)$. Consider the
class of pairs
\[
(q \colon A \to Q, g \colon Q \to B)
\]
with $q$ a surjective homomorphism and $g$ a normal extension such
that  $f=g\circ q$. Since $q$ is an epimorphism, such a $g$ is
unique, hence the existence of $g$ can be viewed as a property
$\varphi(q)$ of the morphism $q$:
\[
\varphi(q) \equiv \text{ there exists $g$ normal such that
$f=g\circ q$.}
\]
 It is obvious that $\varphi(q)$ holds if and only if $\varphi(\pi_q)$ holds. We define $\mathcal{S}_f$ to be the set
\[
S_f = \left\{ (\pi,1_B)\colon f \to H_1(g) \,\,|\,\, \text{$\pi$
is in $Q_A$}\right\}
\]
of objects of $(f\downarrow H_1)$ and $\overline{\mathcal{S}}_f$
to be the class of objects of $(f\downarrow H_1)$ of the form
\[
(q,1_B)\colon f \to H_1(g)
\]
where $q$ is a surjective homomorphism (and $g$ is normal). For
any object $y=(q,1_B)\colon f \to H_1(g) $ of
$\overline{\mathcal{S}}_f$, there is a morphism from the object
\[
s_y= (\pi_q,1_B)\colon f \to H_1(g\circ i_q)
\]
in $\mathcal{S}_f$ to $y$ given by
\[
(i_q,1_B)\colon g\circ i_q \to g.
\]
Now let $y=(\alpha_1,\alpha_0)\colon f \to H_1(g)$ be an object of
$(f\downarrow H_1)$. Consider the commutative diagram in $\Qnd$
\[
\begin{tikzcd}
A \ar[rrr,"\alpha_1"] \ar[rrd,"{\langle f , \alpha_1 \rangle}"] \ar[rd,"p"'] \ar[dd,"f"']& & & C \ar[dd,"g"] \\
 & I \ar[d,"g''"'] \ar[r,"m"'] & B\times_D C \ar[ur,"\alpha_0'"] \ar[d,"g'"] & \\
B \ar[r,equal] & B \ar[r,equal] & B \ar[r,"\alpha_0"] & D
\end{tikzcd}
\]
where
\begin{itemize}
    \item $(B\times_D C, g',\alpha_0')$ is a pullback of $\alpha_0$ and $g$;
    \item $\langle f, \alpha_1 \rangle$ is the unique induced morphism such that $f = g' \circ \langle f, \alpha_1 \rangle$
    and $\alpha_1=\alpha_0' \circ \langle f, \alpha_1 \rangle$;
    \item $\langle f, \alpha_1 \rangle= m\circ p$ is a (regular epi)-mono factorization of $\langle f, \alpha_1 \rangle$;
    \item $g''= g'\circ m$.
\end{itemize}
Since $f$ is surjective, $g''$ is a surjective homomorphism, too.
Therefore, it is also a subobject in $\Ext(\Qnd)$ of the normal
extension $g'$ and, consequently, a normal extension. We define
$y'$ to be the object of $\overline{\mathcal{S}}_f$
\[
y'= (p,1_B)\colon f \to H_1(g'')
\]
and we see that there is an arrow from $y'$ to $y$, namely
\[
(\alpha_0'\circ m,\alpha_0)\colon g'' \to g.
\]
Combining this with the previous observations, we find at least
one object $s=s_{y'}$ in the set $\mathcal{S}_f$ and an arrow from
$s$ to $y$. This  concludes the proof.
\end{proof}

We have just proved that a left adjoint to $H_1$ exists. It
remains to prove that every component of the unit of the
adjunction is a regular epimorphism. This comes from the following
observations:
    \begin{enumerate}
        \item It is easy to show that, for an object $s$ in $S_f$ (or $\overline{S}_f$) and $y$ in $(f\downarrow H_1)$, there is at most one
        arrow $s\to y$ in the category $(f\downarrow H_1)$.
        \item

        Consequently, the initial object
        of $(f\downarrow H_1)$ (i.e. the reflection of $f$ in $\NExt(\Qnd)$) can be chosen in $S_f$. Indeed, it is direct to check that, for
        $\overline{y}$ the reflection of $f$ in $\NExt(\Qnd)$, $s_{\overline{y}'}$ is also initial, so that $s_{\overline{y}'}\cong \overline{y}$.
        This means that the reflection of $f$ in $\NExt(\Qnd)$ is, up to isomorphisms, of the form $\eta_f^1 = (\pi,1_B)\colon f \to H_1(n_f)$ for
        some $\pi$ in $Q_A$ (we denote by $n_f$ the normalization of $f$). If
        we write $(R_n,\pi_1,\pi_2)$ for the congruence associated with $\pi$,  we find that
\[
\xymatrix{
R_n \ar@<0.5ex>[r]^-{\pi_1} \ar@<-0.5ex>[r]_-{\pi_2} \ar[d]_-{\pi_1\circ f= \pi_2\circ f} & A \ar[d]^-f \ar[r]^-{\pi} & A/R \ar[d]^-{n_f} \\
B \ar@<0.5ex>@{=}[r] \ar@<-0.5ex>@{=}[r]& B \ar@{=}[r]& B }
\]
is a coequalizer diagram in $\Ext(\Qnd)$.  This conclude the proof
of Theorem~\ref{TheoNormalization}.
\end{enumerate}

 Note that we can prove in a very similar way that the inclusion $\CExt(\Qnd) \hookrightarrow \Ext(\Qnd)$ has a left adjoint. Indeed, the
 category $\CExt(\Qnd)$ enjoy the same closure properties as $\NExt(\Qnd)$ inside $\Ext(\Qnd)$.

More generally, the same kind of proof (with some obvious
modifications) can be used to obtain the following general result:

\begin{theorem}
Let
\[
\begin{tikzcd}
\catC \arrow[rr,bend left, "I"] & \perp & \catH \arrow[ll,bend
left, "\supseteq"]
\end{tikzcd}
\]
be an admissible adjunction, where $\catC$ is a variety and
$\catH$ is a subvariety of $\catC$. Let us assume that
$\NExt(\catC)$ (resp.~$\CExt(\catC)$) is closed under subobjects
and small products in $\Ext(\catC)$. Then $\NExt(\catC)$
(resp.~$\CExt(\catC)$) is a (regular epi)-reflective subcategory
of $\Ext(\catC)$.
\end{theorem}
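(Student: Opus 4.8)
The plan is to reproduce, in this more general setting, the argument behind Theorem~\ref{TheoNormalization}: verify the hypotheses of the Freyd Adjoint Functor Theorem for the inclusion functor $H\colon \NExt(\catC)\hookrightarrow\Ext(\catC)$ (resp.\ $H\colon \CExt(\catC)\hookrightarrow\Ext(\catC)$), and then check that the components of the resulting unit are regular epimorphisms. I treat both cases at once, writing $\mathcal{E}$ for either $\NExt(\catC)$ or $\CExt(\catC)$; the only features of $\mathcal{E}$ that enter are the two standing hypotheses (closure under subobjects and under small products in $\Ext(\catC)$) together with the pullback-stability of its objects, which holds because the adjunction is admissible \cite[Proposition 4.3]{JK94}.

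First I would show that $\Ext(\catC)$ is small-complete. As $\catC$ is a variety, it is complete and regular, so small products in $\Ext(\catC)$ are computed componentwise as in $\catC^{\rightarrow}$, and equalizers are obtained exactly as in the quandle case, by taking the equalizer in $\catC^{\rightarrow}$ and then applying the (regular epi)--mono factorization to its top component. From the hypothesis that $\mathcal{E}$ is closed under subobjects and small products it then follows that $\mathcal{E}$ is closed under equalizers, hence under all small limits; consequently $\mathcal{E}$ is small-complete and $H$ preserves small limits.

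The heart of the proof is the Solution Set Condition, and this is where I expect most of the effort to go, even though it is essentially a transcription of the quandle argument. For an object $f\colon A\to B$ of $\Ext(\catC)$, I would take as candidate solution set
\[
\mathcal{S}_f=\{(\pi,1_B)\colon f\to H(g)\mid \pi\colon A\to A/R \text{ canonical},\ g\in\mathcal{E},\ f=g\circ\pi\},
\]
indexed by the congruences $R$ on $A$; this is a genuine set because a variety admits only a set of congruences on each algebra. Given an arbitrary object $y=(\alpha_1,\alpha_0)\colon f\to H(g)$ of $(f\downarrow H)$, I would pull $g$ back along $\alpha_0$ to obtain an extension $g'$, factor the comparison morphism $\langle f,\alpha_1\rangle$ as a regular epimorphism followed by a monomorphism, and note that its surjective part $g''$ is a subobject in $\Ext(\catC)$ of $g'$. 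Pullback-stability places $g'$ in $\mathcal{E}$, and closure under subobjects then forces $g''$ into $\mathcal{E}$; replacing $g''$ by its canonical representative yields an element of $\mathcal{S}_f$ together with a morphism to $y$. The delicate point is this factorization diagram: one must check that all the induced arrows assemble into morphisms of $\Ext(\catC)$ and that both standing hypotheses genuinely apply to $g''$. Once this is settled, the Freyd Adjoint Functor Theorem supplies the left adjoint.

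Finally I would prove that the unit is a regular epimorphism. As in the quandle case, there is at most one morphism from an object of $\mathcal{S}_f$ to any object of $(f\downarrow H)$, so the initial object of $(f\downarrow H)$, which is the reflection of $f$ in $\mathcal{E}$, can be chosen inside $\mathcal{S}_f$ and is therefore of the form $(\pi,1_B)$ with $\pi\colon A\to A/R$ a canonical quotient. Writing $\pi_1,\pi_2\colon R\to A$ for the projections of the associated congruence, the component of the unit at $f$ fits into a coequalizer diagram in $\Ext(\catC)$ that presents $\pi\colon A\to A/R$ as the coequalizer of $\pi_1$ and $\pi_2$, with the identity on $B$ in the bottom row; this exhibits the unit as a regular epimorphism and completes the proof.
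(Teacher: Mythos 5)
Your proposal is correct and takes essentially the same approach as the paper, which does not write out a separate proof of this general statement but only remarks that it follows by ``the same kind of proof (with some obvious modifications)'' as Theorem~\ref{TheoNormalization}; your write-up is exactly that transcription, and you correctly identify the two places where the generality is paid for, namely that pullback-stability of the class (from admissibility, \cite[Proposition 4.3]{JK94}) replaces the explicit quandle computations and that closure under subobjects and products is now a hypothesis rather than a lemma.
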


Contrary to the case of central extensions, for normal extensions
we haven't been able to find a concrete description of the
congruence $R_n$ whose quotient gives the normalization of a
quandle extension, since the Adjoint Functor Theorem is not really
constructive. A description of such a congruence (whose existence
is a consequence of Theorem \ref{TheoNormalization}) is an open
problem.


\section*{Acknowledgements}


This work was partially supported by the FCT - Funda\c{c}\~ao para
a Ci\^encia e a Tecnologia - under the grant number
SFRH/BPD/98155/2013, and by the Centre for Mathematics of the
University of Coimbra -- UID/MAT/00324/2013, funded by the
Portuguese Government through FCT/MEC and co-funded by the
European Regional Development Fund through the Partnership
Agreement PT2020.

This work was partially supported by a FNRS grant
\textit{Cr\'{e}dit pour bref s\'{e}jour \`{a} l'\'{e}tranger} that
has allowed the second author to visit the Universidade de
Coimbra.

This work was partially supported by the Programma per Giovani
Ricercatori ``Rita Levi-Montalcini'', funded by the Italian
government through MIUR.


\end{document}